\documentclass[pdflatex,sn-mathphys-num]{sn-jnl}% Math and Physical Sciences Numbered Reference Style
\usepackage{graphicx}%
\usepackage{multirow}%
\usepackage{amsmath,amssymb,amsfonts}%
\usepackage{amsthm}%
\usepackage{mathrsfs}%
\usepackage[title]{appendix}%
\usepackage{xcolor}%
\usepackage{textcomp}%
\usepackage{manyfoot}%
\usepackage{booktabs}%
\usepackage{algorithm}%
\usepackage{algorithmicx}%
\usepackage{algpseudocode}%
\usepackage{listings}%
\theoremstyle{thmstyleone}%
\newtheorem{theorem}{Theorem}%  meant for continuous numbers
\newtheorem{proposition}[theorem]{Proposition}% 

\theoremstyle{thmstyletwo}%
\newtheorem{remark}{Remark}%

\theoremstyle{thmstylethree}%
\newtheorem{definition}{Definition}%

\newtheorem{lemma}{Lemma} % optional, if needed
\newtheorem{assumption}{Assumption}
\newtheorem{corollary}{Corollary}
\newtheorem{problem}{Problem}

\begin{document}

%\title[Article Title]{Article Title}
\title{Exact Contraction Rates via the Berkson--Porta Representation: A Sharp Threshold and Its Herglotz-Kernel Obstruction}

%%=============================================================%%
%% GivenName	-> \fnm{Joergen W.}
%% Particle	-> \spfx{van der} -> surname prefix
%% FamilyName	-> \sur{Ploeg}
%% Suffix	-> \sfx{IV}
%% \author*[1,2]{\fnm{Joergen W.} \spfx{van der} \sur{Ploeg} 
%%  \sfx{IV}}\email{iauthor@gmail.com}
%%=============================================================%%

%\author*[1,2]{\fnm{First} \sur{Author}}\email{iauthor@gmail.com}
%
%\author[2,3]{\fnm{Second} \sur{Author}}\email{iiauthor@gmail.com}
%\equalcont{These authors contributed equally to this work.}
%
%\author[1,2]{\fnm{Third} \sur{Author}}\email{iiiauthor@gmail.com}
%\equalcont{These authors contributed equally to this work.}
%
%\affil*[1]{\orgdiv{Department}, \orgname{Organization}, \orgaddress{\street{Street}, \city{City}, \postcode{100190}, \state{State}, \country{Country}}}
%
%\affil[2]{\orgdiv{Department}, \orgname{Organization}, \orgaddress{\street{Street}, \city{City}, \postcode{10587}, \state{State}, \country{Country}}}
%
%\affil[3]{\orgdiv{Department}, \orgname{Organization}, \orgaddress{\street{Street}, \city{City}, \postcode{610101}, \state{State}, \country{Country}}}

\author*[1]{\fnm{Soumic} \sur{Sarkar}}\email{soumic.sarkar@ut.ee}

\affil*[1]{%
\orgname{Institute of Technology, University of Tartu},
\orgaddress{%
\street{Nooruse 1},
\city{Tartu},
\postcode{50411},
\country{Estonia}}}

%%==================================%%
%% Sample for unstructured abstract %%
%%==================================%%

\abstract{Semigroups of holomorphic self-maps of the unit disc with an interior fixed point are, by the classical Berkson--Porta representation, entirely determined by a single holomorphic function constrained only by a positivity condition on its real part. This paper uses that representation to determine exactly when the associated flow contracts the Kobayashi metric of the disc at its best possible rate --- the rate dictated by linearization at the fixed point --- rather than at some smaller, conservative rate of the kind ordinarily obtained through auxiliary metric constructions. The question is reduced to a single pointwise inequality on the representing function, and this inequality is resolved completely for a natural one-parameter family of nonlinearities, yielding an exact threshold rather than a sufficient condition of undetermined tightness. Beyond this family, an explicit representing function is exhibited for which the inequality fails almost everywhere on the disc, and the Herglotz integral representation underlying the associated Carath\'eodory class is used to trace this failure to concentration of the representing measure, explaining rather than merely documenting why no threshold-free general theorem is available. The results are illustrated by direct numerical verification of the sharp threshold and of the explicit obstruction, and the paper closes by identifying the precise class of representing measures --- point masses and their neighborhoods --- that any future general sufficient condition would need to exclude.}

\keywords{Kobayashi metric, Berkson--Porta representation, semigroups of holomorphic maps, Carath\'eodory functions, Herglotz representation, invariant distances, incremental stability}

%%\pacs[JEL Classification]{D8, H51}

%%\pacs[MSC Classification]{35A01, 65L10, 65L12, 65L20, 65L70}

\maketitle

\section{Introduction}
\label{sec:intro}

Semigroups of holomorphic self-maps of the unit disc have been studied as objects of intrinsic interest in geometric function theory since the modern theory took shape with Berkson and Porta's foundational representation of infinitesimal generators \cite{berkson1978semigroups}, a representation that reduces the classification of such semigroups fixing an interior point to the study of a single holomorphic function subject only to a positivity constraint on its real part. The decades since have developed this representation in several directions --- to more general classes of semigroups on the disc \cite{bracci2007infinitesimal}, to systematic treatments spanning Banach-space settings and the fixed-point theory of nonexpansive holomorphic maps \cite{reichshoikhet2005nonlinear}, to dynamic refinements of the classical Julia--Wolff--Carath\'eodory boundary theory \cite{elinshoikhet2001dynamic}, and to continuing work on the spectral and asymptotic structure of the generators themselves \cite{kourou2025eigenvalues}. Alongside this, the associated Denjoy--Wolff convergence theory, and the precise boundary and convexity hypotheses under which it holds or fails, remains an actively studied question in its own right \cite{bracci2025failure,christodoulou2021stability}.

This paper takes up a question that sits naturally inside this representation theory but appears not to have been posed within it directly: for a semigroup of holomorphic self-maps of the disc fixing the origin, when does the associated flow contract the disc's own intrinsic metric --- the Kobayashi metric, invariant under every biholomorphic change of coordinates and non-increasing under every holomorphic self-map by the classical Schwarz--Pick lemma \cite{ahlfors1979complex,kobayashi1967invariant} --- at exactly the rate dictated by linearization at the fixed point, rather than at some smaller rate obtained through an auxiliary construction external to the disc's own geometry. The question has a natural motivation from outside geometric function theory as well: an intrinsic formulation of incremental stability for holomorphic dynamical systems, phrased through the same Kobayashi metric, has recently been shown to reduce to exactly this kind of differential contraction condition, with the practical difficulty that the auxiliary constructions typically used to verify such a condition certify rates well below what a given system actually exhibits \cite{lohmiller1998contraction}. That difficulty is the origin of the present question, though the question itself, and its resolution, belong to the classical theory of invariant metrics and semigroups of holomorphic maps rather than to dynamical systems as such.

The Berkson--Porta representation turns out to be exactly the right tool for this question, because it converts a statement about an arbitrary holomorphic vector field with a fixed point into a statement about a single auxiliary function constrained only by positivity of its real part --- considerably more tractable than the original object, and amenable to the classical machinery of the Carath\'eodory class and its Herglotz integral representation \cite{ahlfors1979complex}. Under this representation, the exact-rate question reduces to a single pointwise inequality on the auxiliary function, evaluated against the intrinsic metric's own defining formula on the disc, with no auxiliary metric or externally chosen weight entering the reduction at any step.

The reduction is developed with two purposes in mind, pursued in turn. The first is to resolve the pointwise inequality completely for a natural one-parameter family of auxiliary functions, obtaining not an estimate but an exact threshold, together with a proof that the threshold cannot be improved. The second, and in some respects the more informative of the two, is to understand why no such threshold-free general theorem exists: an explicit auxiliary function is exhibited for which the inequality fails almost everywhere on the disc, tautness and hyperbolicity of the domain notwithstanding \cite{abate1989iteration}, and the Herglotz representation of the underlying positive-real-part class is used to trace this failure to a specific, identifiable feature of the representing measure --- its concentration at a point --- rather than leaving the failure as an isolated computational fact. The continuity properties of the Kobayashi metric that make this analysis rigorous, in particular its joint continuity under tautness rather than the weaker upper semicontinuity sometimes assumed in passing, follow from Royden's original regularity study of the metric \cite{royden1971remarks}.

The resulting picture is complete within its stated scope: an exact reduction, a sharp threshold for a genuinely nonlinear family, and a structural explanation, rather than a mere demonstration, of why the general case resists a comparably clean answer. Numerical verification of both the sharp threshold and the explicit obstruction is reported for concreteness, though every claim in the paper is established by direct proof independent of the numerical evidence.

The remainder of the paper is organized as follows. Section~\ref{sec:prelim} recalls the Kobayashi metric and its Dini-derivative comparison machinery, and states the Berkson--Porta representation theorem in full. Section~\ref{sec:reduction} develops the reduction of the exact-rate question to a pointwise inequality on the representing function. Section~\ref{sec:threshold} proves the sharp threshold theorem for a natural parametrized family. Section~\ref{sec:obstruction} exhibits the explicit obstruction and analyzes it through the Herglotz representation. Section~\ref{sec:numerics} reports the numerical verification. Sections~\ref{sec:discussion} and~\ref{sec:conclusion} close with discussion and concluding remarks.

\section{Preliminaries}
\label{sec:prelim}

This section recalls the two bodies of classical theory the paper depends on: the Kobayashi metric on the unit disc and its basic differential comparison property, and the Berkson--Porta representation of infinitesimal generators of semigroups of holomorphic self-maps fixing an interior point, which is stated here in full since it is the paper's central tool rather than background to be cited in passing.

\subsection{The Kobayashi Metric on the Disc}

For a complex manifold $M$, the Kobayashi infinitesimal pseudometric $F_K(z,v)=\inf\{\lambda>0:\exists\,h\in\mathcal{O}(D,M),\ h(0)=z,\ h'(0)=v/\lambda\}$, $z\in M$, $v\in T_zM$, where $D=\{\zeta\in\mathbb{C}:|\zeta|<1\}$, and the associated Kobayashi pseudodistance $d_K(x,y)=\inf_\gamma\int_0^1F_K(\gamma(s),\dot\gamma(s))\,ds$ over piecewise smooth curves joining $x$ and $y$, were introduced in Kobayashi's original construction \cite{kobayashi1967invariant} and developed systematically in \cite{kobayashi1998hyperbolic} and in the comprehensive treatment of invariant distances given by Jarnicki and Pflug \cite{jarnicki2013invariant}; standard conventions for complex manifolds and holomorphic maps follow \cite{krantz2001function}.

On $D$ itself, $F_K(z,v)=|v|/(1-|z|^2)$, the classical Poincar\'e metric, with associated distance $d_K(z,w)=2\tanh^{-1}\left|\dfrac{z-w}{1-\bar zw}\right|$; every result in this paper takes place on $D$ with this explicit metric, rather than on a general manifold, since the Berkson--Porta representation of Section~\ref{subsec:berksonporta} is specific to the disc.

\begin{proposition}[Schwarz--Pick]
\label{prop:schwarzpick}
For any holomorphic map $g:D\to D$, $F_K(g(z),g'(z)v)\le F_K(z,v)$ for all $z\in D$, $v\in\mathbb{C}$, and consequently $d_K(g(x),g(y))\le d_K(x,y)$ for all $x,y\in D$.
\end{proposition}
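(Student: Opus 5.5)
The plan is to establish the infinitesimal inequality first, using the classical Schwarz lemma after normalizing by disc automorphisms, and then to integrate it along curves to obtain the statement for $d_K$.

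\emph{Infinitesimal inequality.} Fix $z\in D$ and set $w=g(z)$. Let $\varphi_a(\zeta)=(\zeta+a)/(1+\bar a\zeta)$ denote the automorphism of $D$ sending $0$ to $a$. The composition $h=\varphi_w^{-1}\circ g\circ\varphi_z$ is a holomorphic self-map of $D$ with $h(0)=0$, so the classical Schwarz lemma (maximum principle applied to $h(\zeta)/\zeta$) gives $|h'(0)|\le 1$. The chain rule gives
\[
h'(0)=(\varphi_w^{-1})'(w)\,g'(z)\,\varphi_z'(0).
\]
A direct computation shows $\varphi_a'(0)=1-|a|^2$, and hence $(\varphi_a^{-1})'(a)=1/(1-|a|^2)$. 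Substituting these values, the bound $|h'(0)|\le1$ becomes
\[
\frac{|g'(z)|}{1-|g(z)|^2}\le\frac{1}{1-|z|^2}.
\]
Multiplying by $|v|$ yields $F_K(g(z),g'(z)v)\le F_K(z,v)$, using the explicit formula $F_K(z,v)=|v|/(1-|z|^2)$. Alternatively, one could argue directly from the infimum definition of $F_K$: if $h\in\mathcal O(D,D)$ satisfies $h(0)=z$ and $h'(0)=v/\lambda$, then $g\circ h$ is admissible for $(g(z),g'(z)v)$ with the same $\lambda$. This gives the inequality without invoking the explicit formula at all, and it is the cleaner argument. I would present it as the main proof and mention the Schwarz computation as a consistency check with the Poincaré formula.

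\emph{Distance inequality.} Let $\gamma$ be a piecewise smooth curve from $x$ to $y$. Then $g\circ\gamma$ is a piecewise smooth curve from $g(x)$ to $g(y)$, and its derivative is $(g\circ\gamma)^{\cdot}(s)=g'(\gamma(s))\dot\gamma(s)$. Applying the infinitesimal inequality pointwise in $s$ and integrating gives
\[
d_K(g(x),g(y))\le\int_0^1F_K\bigl(g(\gamma(s)),g'(\gamma(s))\dot\gamma(s)\bigr)\,ds\le\int_0^1F_K(\gamma(s),\dot\gamma(s))\,ds.
\]
Taking the infimum over all such $\gamma$ gives $d_K(g(x),g(y))\le d_K(x,y)$.

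No step poses a real obstacle. The only points needing care are confirming that the composed map is admissible in the infimum defining $F_K$, and that $g\circ\gamma$ remains piecewise smooth. Both are immediate because $g$ is holomorphic and maps $D$ into $D$.
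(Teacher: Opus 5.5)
Your proof is correct: the automorphism normalization via Schwarz's lemma and the direct argument from the infimum definition (composing admissible discs with $g$) each give the infinitesimal inequality, and integrating along $g\circ\gamma$ then yields the distance bound exactly as you describe. The paper gives no argument of its own and simply cites the classical Schwarz--Pick lemma from Ahlfors, whose standard proof is precisely your Schwarz-lemma computation, so your write-up follows the intended classical route.
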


This is the classical Schwarz--Pick lemma \cite{ahlfors1979complex}, recalled here because it is the static, non-differential instance of the contraction property this paper is concerned with strengthening: Proposition~\ref{prop:schwarzpick} gives non-expansiveness of $d_K$ under any holomorphic self-map, with no strict rate, whereas the results below aim at an exact exponential rate along a flow.

\subsection{A Differential Comparison Principle}

\begin{assumption}
\label{ass:complete}
$f:D\to\mathbb{C}$ is holomorphic and generates a complete flow $\phi_t:D\to D$ for $t\ge0$.
\end{assumption}

For a trajectory $z(t)=\phi_t(z)$ and infinitesimal variation $\delta z(t)=\phi_t'(z)v$ solving $\dot{\delta z}=f'(z(t))\delta z$, write $\psi(t):=F_K(z(t),\delta z(t))$.

\begin{lemma}[Dini comparison]
\label{lem:dini}
Let $\psi:[0,\infty)\to\mathbb{R}_{\ge0}$ be continuous and suppose $D^+\psi(t)\le-\lambda\psi(t)$ for all $t\ge0$ and some $\lambda>0$, where $D^+$ denotes the upper Dini derivative $D^+\psi(t)=\limsup_{\epsilon\to0^+}(\psi(t+\epsilon)-\psi(t))/\epsilon$. Then $\psi(t)\le e^{-\lambda t}\psi(0)$ for all $t\ge0$.
\end{lemma}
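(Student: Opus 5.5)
The plan is to reduce the claim to the classical fact that a continuous function is non-increasing whenever its upper Dini derivative is everywhere non-positive. Set $\chi(t):=e^{\lambda t}\psi(t)$. Then $\chi$ is continuous and nonnegative on $[0,\infty)$, and the goal becomes $\chi(t)\le\chi(0)$. The first step computes $D^+\chi$. Since $e^{\lambda t}$ is differentiable, one can write
\[
\frac{\chi(t+\epsilon)-\chi(t)}{\epsilon}=e^{\lambda(t+\epsilon)}\frac{\psi(t+\epsilon)-\psi(t)}{\epsilon}+\psi(t)\frac{e^{\lambda(t+\epsilon)}-e^{\lambda t}}{\epsilon}.
\]
The factor $e^{\lambda(t+\epsilon)}$ is positive and tends to $e^{\lambda t}$, and the second term converges. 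Taking $\limsup$ therefore gives
\[
D^+\chi(t)=e^{\lambda t}\bigl(D^+\psi(t)+\lambda\psi(t)\bigr)\le0 .
\]
If $D^+\psi(t)=-\infty$ the inequality still holds in the extended sense.

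The second step proves monotonicity, and this is the main obstacle, because Dini derivatives do not satisfy a mean value theorem directly. I would use the standard perturbation trick. Fix $\eta>0$ and set $\chi_\eta(t):=\chi(t)-\eta t$, so that $D^+\chi_\eta(t)\le-\eta<0$ for every $t$. Suppose, for contradiction, that $\chi_\eta(b)>\chi_\eta(a)$ for some $0\le a<b$. Choose a level $c$ strictly between $\chi_\eta(a)$ and $\chi_\eta(b)$, and let
\[
s:=\sup\{t\in[a,b]:\chi_\eta(t)\le c\}.
\]
By continuity $\chi_\eta(s)=c$, and $s<b$. Moreover $\chi_\eta(t)>c$ for all $t\in(s,b]$, so every difference quotient $(\chi_\eta(s+\epsilon)-\chi_\eta(s))/\epsilon$ with $0<\epsilon\le b-s$ is positive. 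Hence $D^+\chi_\eta(s)\ge0$, which contradicts $D^+\chi_\eta(s)\le-\eta$.

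It follows that $\chi_\eta$ is non-increasing for each $\eta>0$, so $\chi(t)\le\chi(0)+\eta t$. Letting $\eta\to0$ gives $\chi(t)\le\chi(0)$, which is exactly $\psi(t)\le e^{-\lambda t}\psi(0)$. The nonnegativity of $\psi$ is never used beyond ensuring that the statement makes sense.
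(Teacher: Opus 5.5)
Your proof is correct. The integrating factor $e^{\lambda t}$ reduces the claim to monotonicity of $\chi$ under $D^+\chi\le0$, and the $\eta$-perturbation together with the last point $s$ at which $\chi_\eta\le c$ gives the contradiction $D^+\chi_\eta(s)\ge0>-\eta$. The paper gives no proof of its own and only points to the classical comparison principle for Dini-derivative inequalities; your argument is essentially that standard argument written out in full.
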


Lemma~\ref{lem:dini} is a standard comparison-principle argument for differential inequalities of this type, in the classical style developed for ordinary differential equations more generally \cite{hartman2002ordinary}; it is needed here, rather than an ordinary Gr\"onwall argument, because $F_K$ is defined through an infimum and need not be classically differentiable along a trajectory even where it is continuous. Continuity of $\psi$, on which the comparison argument depends, follows from joint continuity of $F_K$ on $D\times\mathbb{C}$, itself a consequence of Royden's regularity analysis of the Kobayashi metric \cite{royden1971remarks}, composed with continuity of the flow under Assumption~\ref{ass:complete}.

\begin{definition}
\label{def:intrinsiccontraction}
System $\dot z=f(z)$ is intrinsically contracting with rate $\lambda>0$ if $D^+F_K(z(t),\delta z(t))\le-\lambda F_K(z(t),\delta z(t))$ holds along every trajectory and every admissible infinitesimal variation.
\end{definition}

\begin{theorem}
\label{thm:global}
If $\dot z=f(z)$ is intrinsically contracting with rate $\lambda>0$, then $d_K(\phi_t(x),\phi_t(y))\le e^{-\lambda t}d_K(x,y)$ for all $x,y\in D$, $t\ge0$.
\end{theorem}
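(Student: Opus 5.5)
The plan is to pass from the infinitesimal statement to the integrated one by transporting curves along the flow. Fix $x,y\in D$, $t\ge0$ and $\varepsilon>0$. By the definition of $d_K$ as an infimum of lengths, choose a piecewise smooth curve $\gamma:[0,1]\to D$ from $x$ to $y$ with $\int_0^1F_K(\gamma(s),\dot\gamma(s))\,ds\le d_K(x,y)+\varepsilon$. Under Assumption~\ref{ass:complete} each $\phi_t$ is a holomorphic self-map of $D$, and $(t,z)\mapsto\phi_t(z)$ is jointly smooth, holomorphic in $z$. Hence $\gamma_t:=\phi_t\circ\gamma$ is again a piecewise smooth curve joining $\phi_t(x)$ to $\phi_t(y)$, with $\dot\gamma_t(s)=\phi_t'(\gamma(s))\dot\gamma(s)$ at every non-break point $s$.

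Next, fix a non-break parameter $s$ and apply the hypothesis along the trajectory $z(\tau)=\phi_\tau(\gamma(s))$ with initial variation $v=\dot\gamma(s)$. The variation $\delta z(\tau)=\phi_\tau'(\gamma(s))v$ solves the variational equation $\dot{\delta z}=f'(z(\tau))\delta z$ by differentiating the flow in $z$, so it is admissible in the sense of Definition~\ref{def:intrinsiccontraction}. The function $\psi(\tau)=F_K(z(\tau),\delta z(\tau))$ is continuous. On $D$ this is immediate from the explicit formula $F_K(z,v)=|v|/(1-|z|^2)$, without needing the Royden argument. Intrinsic contraction gives $D^+\psi\le-\lambda\psi$, so Lemma~\ref{lem:dini} yields
\[
F_K\bigl(\gamma_t(s),\dot\gamma_t(s)\bigr)\le e^{-\lambda t}F_K\bigl(\gamma(s),\dot\gamma(s)\bigr).
\]
This holds for all but finitely many $s$.

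Integrating in $s$ and using that $d_K(\phi_t(x),\phi_t(y))$ is at most the length of $\gamma_t$, we get
\[
d_K(\phi_t(x),\phi_t(y))\le\int_0^1F_K(\gamma_t,\dot\gamma_t)\,ds\le e^{-\lambda t}\bigl(d_K(x,y)+\varepsilon\bigr).
\]
Letting $\varepsilon\to0$ gives the claim.

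The main technical point is the measurability and integrability of the transported integrand, together with the fact that the pointwise bound holds at almost every $s$. Both follow from the continuity of $(s)\mapsto F_K(\gamma_t(s),\dot\gamma_t(s))$ on each smooth piece, using the explicit Poincaré formula. The other point to check is that the Dini hypothesis is applied with $\psi(0)=F_K(\gamma(s),\dot\gamma(s))$, which holds because $\phi_0=\mathrm{id}$. A degenerate case is $\dot\gamma(s)=0$: then $\delta z\equiv0$ and the bound is trivial.
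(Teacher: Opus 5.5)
Your proof is correct and follows the same route as the paper. Both transport a curve by the flow, apply Lemma~\ref{lem:dini} for each fixed $s$ to $\psi(\tau)=F_K(\phi_\tau(\gamma(s)),\phi_\tau'(\gamma(s))\dot\gamma(s))$, integrate in $s$, and pass to the infimum over curves. Your $\varepsilon$-near-optimal curve and your use of the explicit formula $F_K(z,v)=|v|/(1-|z|^2)$ for continuity, in place of Royden's regularity result, are only minor refinements of the paper's argument.
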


\begin{proof}
Fix $x,y\in D$ and a piecewise smooth curve $\gamma$ joining them, and write $\gamma_t=\phi_t\circ\gamma$. For each fixed $s$, $t\mapsto F_K(\gamma_t(s),\dot\gamma_t(s))$ is continuous, and Definition~\ref{def:intrinsiccontraction} gives the hypothesis of Lemma~\ref{lem:dini} for $\psi(t)=F_K(\gamma_t(s),\dot\gamma_t(s))$, yielding $F_K(\gamma_t(s),\dot\gamma_t(s))\le e^{-\lambda t}F_K(\gamma(s),\dot\gamma(s))$ for every $s$. Integrating over $s\in[0,1]$ and taking the infimum over $\gamma$ on both sides gives the stated bound.
\end{proof}

\begin{remark}
Theorem~\ref{thm:global} is the mechanism by which a pointwise differential condition becomes a global, finite-distance statement, with no loss and no structure beyond $F_K$ itself entering the argument. It says nothing about which $f$ satisfy Definition~\ref{def:intrinsiccontraction}, nor at what rate; that is the subject of Sections~\ref{sec:reduction}--\ref{sec:obstruction}.
\end{remark}

\subsection{The Berkson--Porta Representation}
\label{subsec:berksonporta}

\begin{assumption}
\label{ass:fixedpoint}
$f:D\to\mathbb{C}$ is holomorphic, $f(0)=0$, and $f$ generates a semiflow $\{\phi_t\}_{t\ge0}$ of holomorphic self-maps of $D$.
\end{assumption}

\begin{theorem}[Berkson--Porta \cite{berkson1978semigroups}]
\label{thm:berksonporta}
Under Assumption~\ref{ass:fixedpoint}, $f$ admits the representation $f(z)=-z\,p(z)$ for a holomorphic function $p:D\to\mathbb{C}$ with $\mathrm{Re}(p(z))\ge0$ for all $z\in D$. Conversely, every $f$ of this form, for any such $p$, generates a semiflow of holomorphic self-maps of $D$ fixing the origin.
\end{theorem}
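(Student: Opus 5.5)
The plan has two directions. The forward direction turns the semiflow property into a boundary-flux inequality. The converse turns the sign of $\mathrm{Re}\,p$ into invariance of the disc.

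\emph{Forward direction.} Since $f(0)=0$, write $f(z)=-z\,p(z)$ with
\[
p(z)=-f(z)/z \quad (z\neq0), \qquad p(0)=-f'(0).
\]
This $p$ is holomorphic on $D$ because the singularity at $0$ is removable. It remains to show $\mathrm{Re}\,p\ge0$.

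The first step is to prove that $|\phi_t(z)|$ is non-increasing in $t$. Each $\phi_t$ is a holomorphic self-map of $D$ fixing $0$, so the Schwarz lemma (the case of Proposition~\ref{prop:schwarzpick} with a fixed point at the origin) gives $|\phi_t(z)|\le|z|$. The semigroup law $\phi_{t+s}=\phi_s\circ\phi_t$ then upgrades this to monotonicity along every trajectory.

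Differentiating at $t=0$ gives
\[
0\ge \frac{d}{dt}\Big|_{t=0}|\phi_t(z)|^2=2\,\mathrm{Re}\big(\bar z f(z)\big)=-2|z|^2\,\mathrm{Re}\,p(z).
\]
Hence $\mathrm{Re}\,p(z)\ge0$ for every $z\neq0$, and the case $z=0$ follows by continuity.

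\emph{Converse direction.} Given $\mathrm{Re}\,p\ge0$, set $f=-zp$ and solve $\dot z=f(z)$ locally by Cauchy--Lipschitz. Along any solution,
\[
\frac{d}{dt}|z(t)|^2=-2|z(t)|^2\,\mathrm{Re}\,p(z(t))\le0,
\]
so every trajectory stays in the closed disc $\{|w|\le|z(0)|\}$. This is a compact subset of $D$, so the solution cannot escape in finite time and therefore exists for all $t\ge0$.

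The standard ODE facts then finish the argument:
\begin{itemize}
\item holomorphic dependence on initial data makes each $\phi_t$ holomorphic;
\item uniqueness of solutions gives the semigroup law;
\item $f(0)=0$ gives $\phi_t(0)=0$.
\end{itemize}

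\emph{Main obstacle.} The delicate point is the differentiation step in the forward direction, specifically what regularity in $t$ is available. If Assumption~\ref{ass:fixedpoint} is read as saying that $\phi_t$ solves $\dot z=f(z)$, then $t\mapsto\phi_t(z)$ is $C^1$ and the calculation above is immediate.

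If instead only a continuous semigroup is given, with $f$ defined as its generator, I would first invoke the classical result that continuous semigroups of holomorphic self-maps are automatically differentiable in $t$ (Berkson--Porta). Alternatively, I would avoid differentiability by a difference-quotient argument: the inequality $|\phi_h(z)|^2-|z|^2\le0$, divided by $h$ with $h\to0^+$, yields the same sign condition directly.
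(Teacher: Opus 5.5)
Your proof is correct. Note that the paper gives no proof of this theorem; it only cites Berkson and Porta. So your proposal is a self-contained replacement for that citation, not a variant of an argument in the text.

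\textbf{What your route establishes.} Both directions are the standard elementary proof of the interior-fixed-point case.
\begin{itemize}
\item Forward direction: $p=-f/z$ has a removable singularity at $0$. The Schwarz lemma gives $|\phi_h(z)|\le|z|$. The right derivative of $|\phi_t(z)|^2$ at $t=0$ equals $-2|z|^2\,\mathrm{Re}\,p(z)$, which gives the sign of $\mathrm{Re}\,p$.
\item Converse: $|z|^2$ acts as a Lyapunov function, trapping each trajectory in the compact set $\{|w|\le|z(0)|\}\subset D$. Hence the maximal solution is global and the maps are self-maps of $D$.
\end{itemize}

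\textbf{What the citation covers beyond your argument.} The original work contains two harder results that your argument does not reproduce:
\begin{itemize}
\item A merely continuous semigroup of holomorphic self-maps is automatically differentiable in $t$, with a holomorphic generator.
\item The general representation is $f(z)=(\bar\tau z-1)(z-\tau)p(z)$ with $\tau\in\overline{D}$. This includes a Denjoy--Wolff point on the boundary, which requires Julia-type estimates rather than the Schwarz lemma.
\end{itemize}
Neither is needed here. Assumption~\ref{ass:fixedpoint} already supplies a holomorphic $f$ with $f(0)=0$ whose flow solves $\dot z=f(z)$. So the regularity issue you flag as your main obstacle does not arise under the paper's reading. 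Your difference-quotient fallback also correctly handles the fact that only a one-sided derivative exists at $t=0$.

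\textbf{Two small tidy-ups.}
\begin{itemize}
\item In the forward direction, state explicitly that $\phi_t(0)=0$ follows from $f(0)=0$ and uniqueness of solutions. At present you only note this in the converse.
\item The monotonicity of $|\phi_t(z)|$ via the semigroup law is unnecessary. The bound $|\phi_h(z)|\le|z|=|\phi_0(z)|$ for $h\ge0$ already fixes the sign of the right derivative at $t=0$.
\end{itemize}
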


Theorem~\ref{thm:berksonporta} has organized the study of semigroups of holomorphic self-maps of the disc since its introduction, and its extensions to more general classes of generators \cite{bracci2007infinitesimal}, to Banach-space and hyperbolic-metric settings \cite{reichshoikhet2005nonlinear}, and to the associated Julia--Wolff--Carath\'eodory boundary theory \cite{elinshoikhet2001dynamic} remain active. Its role here is that every $f$ satisfying Assumption~\ref{ass:fixedpoint} is completely determined by the single function $p$, subject only to a sign condition on $\mathrm{Re}(p)$, so that Definition~\ref{def:intrinsiccontraction}'s condition for such $f$ can be phrased, without loss of generality, as a condition on $p$.

\begin{remark}
\label{rem:normalization}
Writing $f'(0)=-p(0)$ and taking $p(0)=a$ for real $a>0$ throughout --- the case of interest, since $a$ is the linearized contraction rate at the fixed point --- Theorem~\ref{thm:berksonporta} reduces the search for $f$ with a prescribed linearized rate $a$ to the search for holomorphic $p$ with $\mathrm{Re}(p)\ge0$ on $D$ and $p(0)=a$, a class of functions classically known as, up to the positive scalar $a$, the Carath\'eodory class \cite{ahlfors1979complex}.
\end{remark}

\subsection{Problem Formulation}

\begin{problem}
\label{prob:main}
Under Assumptions~\ref{ass:complete} and~\ref{ass:fixedpoint}, with $f(z)=-zp(z)$ as in Theorem~\ref{thm:berksonporta} and $p(0)=a>0$, determine for which $p$ the exact rate $\lambda=a$ can be certified in Definition~\ref{def:intrinsiccontraction}, as opposed to some strictly smaller rate, and characterize the obstruction where it cannot.
\end{problem}

Sections~\ref{sec:reduction} through~\ref{sec:obstruction} resolve Problem~\ref{prob:main} as completely as the underlying analysis allows: a necessary and sufficient pointwise condition on $p$ is identified in Section~\ref{sec:reduction}, shown to hold up to a sharp threshold for a natural family in Section~\ref{sec:threshold}, and shown to fail in general, for an explicit and structurally understood reason, in Section~\ref{sec:obstruction}.

%\begin{thebibliography}{1}
%\bibliographystyle{IEEEtran}
%
%\bibitem{kobayashi1998hyperbolic}
%S.~Kobayashi, \textit{Hyperbolic Complex Spaces}, vol.~318 of Grundlehren der mathematischen Wissenschaften. Berlin, Heidelberg: Springer, 1998.
%
%\bibitem{jarnicki2013invariant}
%M.~Jarnicki and P.~Pflug, \textit{Invariant Distances and Metrics in Complex Analysis}. Berlin, Boston: De Gruyter, 2013.
%
%\bibitem{krantz2001function}
%S.~G. Krantz, \textit{Function Theory of Several Complex Variables}, 2nd~ed. Providence, RI: American Mathematical Society, 2001.
%
%\bibitem{hartman2002ordinary}
%P.~Hartman, \textit{Ordinary Differential Equations}, 2nd~ed., Classics in Applied Mathematics. Philadelphia, PA: SIAM, 2002.
%
%\end{thebibliography}

\section{Exact Contraction via the Berkson--Porta Representation}
\label{sec:reduction}

Theorem~\ref{thm:global} converts a differential condition into a global one with no loss, but the differential condition itself, Definition~\ref{def:intrinsiccontraction}, remains abstract. This section shows that under Assumption~\ref{ass:fixedpoint}, it reduces to a single, explicit, pointwise inequality on the Berkson--Porta function $p$, and that satisfying this inequality certifies the exact rate $a=p(0)$ rather than an approximation to it.

\subsection{The Poincar\'e Metric as Its Own Weight}

Write $H(z):=(1-|z|^2)^{-2}$, so that $F_K(z,v)^2=H(z)|v|^2$ exactly, with $H$ not a metric chosen independently of $F_K$ but $F_K$ itself written in squared form. This identity, elementary as it is, is the reason the construction below yields an exact rather than conservative rate: no comparison between two different metrics enters at any stage.

For $z(t)$ a trajectory of $\dot z=f(z)$ and $\delta z(t)$ the corresponding infinitesimal variation, set $\psi(t):=F_K(z(t),\delta z(t))^2=H(z(t))|\delta z(t)|^2$.

\begin{lemma}
\label{lem:localrate}
$\psi(t)$ satisfies $\dot\psi(t)=-\lambda_{\mathrm{local}}(z(t))\,\psi(t)$, where
\begin{equation}
\lambda_{\mathrm{local}}(z) := -\left[\frac{4\,\mathrm{Re}(\bar zf(z))}{1-|z|^2} + 2\,\mathrm{Re}(f'(z))\right]. \label{eq:lambdalocal}
\end{equation}
\end{lemma}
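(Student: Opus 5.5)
The plan is to differentiate $\psi(t)=H(z(t))|\delta z(t)|^2$ directly along the trajectory, using the product rule. This is legitimate because $H$ is a smooth, indeed real-analytic, function of $z$ on $D$. Both $z(t)$ and $\delta z(t)$ are $C^1$ in $t$: the first solves $\dot z=f(z)$ and the second solves $\dot{\delta z}=f'(z(t))\delta z$, with $f$ holomorphic under Assumption~\ref{ass:complete}. So $\psi$ is classically differentiable, and no Dini machinery is needed at this stage. I will compute the two factors separately and then factor $\psi$ back out.

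For the variation factor, write $|\delta z|^2=\delta z\,\overline{\delta z}$. Then
\[
\frac{d}{dt}|\delta z|^2=2\,\mathrm{Re}\bigl(\overline{\delta z}\,\dot{\delta z}\bigr)=2\,\mathrm{Re}\bigl(f'(z)\bigr)|\delta z|^2,
\]
since $\overline{\delta z}\,f'(z)\,\delta z=f'(z)|\delta z|^2$.

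For the metric factor, write $H=(1-|z|^2)^{-2}$. Using $\frac{d}{dt}|z|^2=2\,\mathrm{Re}(\bar z\dot z)=2\,\mathrm{Re}(\bar z f(z))$, the chain rule gives
\[
\frac{d}{dt}H(z(t))=\frac{2\cdot 2\,\mathrm{Re}(\bar z f(z))}{(1-|z|^2)^3}=\frac{4\,\mathrm{Re}(\bar z f(z))}{1-|z|^2}\,H(z).
\]

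Adding the two logarithmic derivatives gives
\[
\dot\psi=\Bigl[\frac{4\,\mathrm{Re}(\bar zf(z))}{1-|z|^2}+2\,\mathrm{Re}(f'(z))\Bigr]\psi,
\]
which is exactly $-\lambda_{\mathrm{local}}(z(t))\,\psi(t)$ with $\lambda_{\mathrm{local}}$ as in \eqref{eq:lambdalocal}.

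There is no substantive obstacle here. The only point needing care is the sign and the factor in $\frac{d}{dt}(1-|z|^2)^{-2}$: the exponent $-2$ meets the derivative $-2\,\mathrm{Re}(\bar z\dot z)$ of $1-|z|^2$, and the two minus signs cancel to give the coefficient $4$ with a positive sign. It is also worth remarking, for later use with Lemma~\ref{lem:dini}, that $\psi$ is the square of $F_K$. Wherever $F_K\neq0$, the rate for $F_K$ itself is therefore $\lambda_{\mathrm{local}}/2$. That bookkeeping belongs to the subsequent reduction rather than to this lemma.
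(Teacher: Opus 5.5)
Your proposal is correct and follows essentially the same route as the paper: apply the product rule to $\psi=H(z(t))|\delta z(t)|^2$, use $\frac{d}{dt}|\delta z|^2=2\,\mathrm{Re}(f'(z))|\delta z|^2$ and $\dot H=4\,\mathrm{Re}(\bar z f(z))(1-|z|^2)^{-3}$, and factor $\psi$ back out. The only cosmetic difference is that the paper obtains $\dot H$ from the Wirtinger derivatives $\partial_z H$ and $\partial_{\bar z}H$, whereas you differentiate through $|z|^2$ by the chain rule; the two computations are equivalent.
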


\begin{proof}
Differentiating $H(z(t))=(1-z(t)\overline{z(t)})^{-2}$ along the flow, using $\partial_zH=2\bar z(1-|z|^2)^{-3}$ and $\partial_{\bar z}H=2z(1-|z|^2)^{-3}$, gives $\dot H=2(1-|z|^2)^{-3}[\bar zf(z)+z\overline{f(z)}]=4(1-|z|^2)^{-3}\mathrm{Re}(\bar zf(z))$. Differentiating $|\delta z|^2$ along $\dot{\delta z}=f'(z)\delta z$ gives $\frac{d}{dt}|\delta z|^2=2\,\mathrm{Re}(f'(z))|\delta z|^2$. Combining, $\dot\psi=\dot H|\delta z|^2+H\cdot2\,\mathrm{Re}(f'(z))|\delta z|^2=[\dot H/H+2\,\mathrm{Re}(f'(z))]\,H|\delta z|^2=-\lambda_{\mathrm{local}}(z)\psi$.
\end{proof}

\subsection{The Reduction}

\begin{theorem}
\label{thm:reduction}
Let $K\subset D$ be forward invariant under $\dot z=f(z)$, and suppose $\lambda_{\mathrm{local}}(z)\ge2a$ for all $z\in K$. Then $\dot z=f(z)$ is intrinsically contracting on $K$ with the exact rate $a=p(0)$, and $d_K(\phi_t(x),\phi_t(y))\le e^{-at}d_K(x,y)$ for all $x,y\in K$, $t\ge0$, with no multiplicative constant.
\end{theorem}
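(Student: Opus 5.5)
The plan is to combine Lemma~\ref{lem:localrate} with the Dini comparison of Lemma~\ref{lem:dini} and then integrate along curves as in Theorem~\ref{thm:global}. The squared quantity $\psi$ is the natural object to work with, and the hypothesis $\lambda_{\mathrm{local}}\ge 2a$ becomes rate $a$ after taking square roots.

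Fix $z_0\in K$ and $v\in\mathbb{C}$, and let $z(t)=\phi_t(z_0)$ and $\delta z(t)=\phi_t'(z_0)v$. By forward invariance, $z(t)\in K$ for all $t\ge0$. Along the trajectory, $\psi(t)=H(z(t))|\delta z(t)|^2$ is a $C^1$ function of $t$, because $z(t)$ stays in $D$, $H$ is smooth on $D$, and $\delta z$ solves a linear ODE with continuous coefficients. Lemma~\ref{lem:localrate} then gives
\[
\dot\psi=-\lambda_{\mathrm{local}}(z(t))\psi\le -2a\psi .
\]
This step uses $\psi\ge0$ and $z(t)\in K$. Grönwall, or equivalently Lemma~\ref{lem:dini} with $\lambda=2a$ since here $D^+\psi=\dot\psi$, yields $\psi(t)\le e^{-2at}\psi(0)$. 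Taking nonnegative square roots gives
\[
F_K(z(t),\delta z(t))\le e^{-at}F_K(z_0,v).
\]

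To match Definition~\ref{def:intrinsiccontraction}, I would also record the differential form for $\sqrt\psi$. Where $\psi>0$, $\frac{d}{dt}\sqrt\psi=\dot\psi/(2\sqrt\psi)\le -a\sqrt\psi$. If $\psi(t_0)=0$ for some $t_0$, then $\delta z(t_0)=0$. Uniqueness for the linear variational equation then forces $\delta z\equiv0$, so the inequality holds trivially. Hence the system is intrinsically contracting on $K$ with rate $a=p(0)$, where $p(0)=a$ comes from Theorem~\ref{thm:berksonporta} and Remark~\ref{rem:normalization}.

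For the distance bound, I would follow the proof of Theorem~\ref{thm:global}. Take a piecewise smooth curve $\gamma$ from $x$ to $y$, apply the pointwise estimate to $\gamma_t=\phi_t\circ\gamma$ with $v=\dot\gamma(s)$, integrate in $s$, and take the infimum. Because the rate is exactly $e^{-at}$ with constant $1$, no multiplicative constant appears.

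The main obstacle is that the pointwise estimate is only available when $\gamma(s)\in K$. Curves realizing $d_K(x,y)$ may leave $K$, which would give the bound for the intrinsic distance of $K$ rather than for $d_K$. I would address this in one of two ways:
\begin{itemize}
\item assume, as is natural here, that $K$ contains the hyperbolic geodesic segments between its points, that is, $K$ is hyperbolically convex, such as a disc $\{|z|\le r\}$ centered at the fixed point;
\item restrict to curves in $K$ and note that for convex $K$ the geodesic lies in $K$, so the infimum is unchanged.
\end{itemize}
I would state this convexity explicitly at the point of integration. For $K=D$ it is automatic.
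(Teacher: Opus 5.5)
Your argument follows the paper's route: Lemma~\ref{lem:localrate} plus forward invariance gives $\dot\psi\le-2a\psi$, taking square roots gives rate $a$, and the distance bound comes from integrating along curves as in Theorem~\ref{thm:global}. Your treatment of $\psi=0$ via uniqueness also tidies up the paper's ``wherever $F_K>0$''. You diverge only in the caveat about curves leaving $K$, and there you are right and the paper is not. Its proof simply invokes Theorem~\ref{thm:global}, whose proof uses the pointwise estimate at every point of an arbitrary connecting curve in $D$, while the hypothesis supplies that estimate only on $K$.

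The convexity assumption is genuinely needed: the distance claim fails for non-convex $K$.
\begin{itemize}
\item Take $p_c$ with $c=0.9$, $x=0.3e^{i\pi/6}$, and the forward-invariant set $K=\{0\}\cup\{\phi_t(x),\phi_t(\bar x):t\ge0\}$.
\item Along the flow $\frac{d}{dt}\arg z=ac|z|^2\sin(2\arg z)$, so $\arg\phi_t(x)$ stays in $[\pi/6,\pi/2)$ and $\cos2\theta\le1/2$ on that orbit; the conjugate orbit is symmetric.
\item Rearranging \eqref{eq:lambdalocalfamily2} as $\lambda_{\mathrm{local}}-2a=\frac{2ar^2}{1-r^2}\,[2-c(3-r^2)\cos2\theta]$ then shows $\lambda_{\mathrm{local}}\ge2a$ on all of $K$.
\item On the other hand, $h(z)=z(1-cz^2)^{-1/2}$ satisfies $h'f_c=-ah$, hence $h\circ\phi_t=e^{-at}h$, and $h(z)=z+O(z^3)$.
\item Therefore $e^{at}d_K(\phi_t(x),\phi_t(\bar x))\to2|h(x)-h(\bar x)|=4\,\mathrm{Im}\,h(x)\approx0.6500$, whereas $d_K(x,\bar x)\approx0.6479$. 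The claimed bound fails for large $t$.
\end{itemize}
The reason is visible geometrically: the geodesic from $x$ to $\bar x$ crosses the positive real axis near $0.254$, where \eqref{eq:pointwiseineq} fails.

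What your argument proves, correctly, has two parts. First, intrinsic contraction holds on any forward-invariant $K$. Second, $d_K(\phi_t(x),\phi_t(y))\le e^{-at}\ell_K(x,y)$, where $\ell_K$ is the infimum of hyperbolic lengths of piecewise smooth curves lying in $K$. This $\ell_K$ equals $d_K(x,y)$ when $K$ is hyperbolically convex, for example $K=D$ as in Corollary~\ref{cor:globalrate}, or $\{|z|\le r\}$. Keep that hypothesis explicit in the statement, not just at the point of integration.
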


\begin{proof}
By Lemma~\ref{lem:localrate}, $\dot\psi(t)=-\lambda_{\mathrm{local}}(z(t))\psi(t)\le-2a\psi(t)$ whenever $z(t)\in K$, which holds for all $t\ge0$ by forward invariance. Since $\psi=F_K^2$, this gives $2F_K\dot F_K\le-2aF_K^2$, i.e., $D^+F_K(z(t),\delta z(t))\le-aF_K(z(t),\delta z(t))$ wherever $F_K>0$, which is Definition~\ref{def:intrinsiccontraction} with rate exactly $a$. Theorem~\ref{thm:global} then gives the stated bound, with unit constant because $H$ is $F_K$ itself rather than an independently chosen weight admitting only two-sided equivalence constants.
\end{proof}

\begin{remark}
\label{rem:exactness}
$\lambda_{\mathrm{local}}(0)=2a$ exactly, by direct substitution into~\eqref{eq:lambdalocal} at $z=0$: the linear terms of $f$ alone determine $\lambda_{\mathrm{local}}$ at the fixed point, independent of the higher-order structure of $f$. This is the best rate that could possibly be certified, since Definition~\ref{def:intrinsiccontraction}'s hypothesis, evaluated in the limit as trajectories approach the fixed point, can never exceed the linearized rate there. Theorem~\ref{thm:reduction} is exact precisely because, whenever its hypothesis holds throughout $K$, it certifies this best possible rate uniformly across $K$, not only in a neighborhood of the origin.
\end{remark}

\subsection{Substitution of the Berkson--Porta Form}

\begin{proposition}
\label{prop:bpsubstitution}
With $f(z)=-zp(z)$ as in Theorem~\ref{thm:berksonporta}, $\lambda_{\mathrm{local}}(z) = 2\,\mathrm{Re}(p(z))\dfrac{1+|z|^2}{1-|z|^2} + 2\,\mathrm{Re}(zp'(z))$.
\end{proposition}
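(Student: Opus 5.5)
The plan is to substitute $f(z)=-zp(z)$ directly into the definition \eqref{eq:lambdalocal} of $\lambda_{\mathrm{local}}$ from Lemma~\ref{lem:localrate} and simplify the two bracketed terms separately. No analytic input beyond holomorphy of $p$ is needed; the statement is an algebraic identity valid pointwise on $D$.

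First, for the term involving $\bar z f(z)$, note that $\bar z f(z) = -\bar z z\,p(z) = -|z|^2 p(z)$. Since $|z|^2$ is real, $\mathrm{Re}(\bar z f(z)) = -|z|^2\mathrm{Re}(p(z))$, so the first term inside the bracket is $-4|z|^2\mathrm{Re}(p(z))/(1-|z|^2)$.

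Second, differentiating gives $f'(z) = -p(z) - zp'(z)$, hence $2\,\mathrm{Re}(f'(z)) = -2\,\mathrm{Re}(p(z)) - 2\,\mathrm{Re}(zp'(z))$.

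Finally, I would insert both pieces into \eqref{eq:lambdalocal}, remove the overall minus sign, and collect the $\mathrm{Re}(p)$ terms over the common denominator $1-|z|^2$:
\begin{equation*}
\frac{4|z|^2}{1-|z|^2}+2 = \frac{4|z|^2 + 2 - 2|z|^2}{1-|z|^2} = \frac{2(1+|z|^2)}{1-|z|^2}.
\end{equation*}
This gives exactly $2\,\mathrm{Re}(p(z))\frac{1+|z|^2}{1-|z|^2} + 2\,\mathrm{Re}(zp'(z))$.

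There is no real obstacle here. The only place needing care is sign bookkeeping: \eqref{eq:lambdalocal} carries an outer minus sign, and $f$ and $f'$ each carry a minus from the Berkson--Porta form, so these must cancel correctly. As a consistency check, I would evaluate at $z=0$, where the formula returns $2\,\mathrm{Re}(p(0))=2a$, matching Remark~\ref{rem:exactness}.
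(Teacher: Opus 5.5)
Your proposal is correct and follows essentially the same route as the paper: substitute $f(z)=-zp(z)$ into \eqref{eq:lambdalocal}, use $\mathrm{Re}(\bar z f(z))=-|z|^2\mathrm{Re}(p(z))$ and $f'(z)=-p(z)-zp'(z)$, then combine the $\mathrm{Re}(p)$ terms over $1-|z|^2$. Your sign bookkeeping and the $z=0$ consistency check are both fine.
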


\begin{proof}
Substituting $f(z)=-zp(z)$ into~\eqref{eq:lambdalocal}: $\bar zf(z)=-|z|^2p(z)$, so $\mathrm{Re}(\bar zf(z))=-|z|^2\mathrm{Re}(p(z))$, and $f'(z)=-p(z)-zp'(z)$. Then $\lambda_{\mathrm{local}}(z)=4|z|^2\mathrm{Re}(p(z))/(1-|z|^2)+2\mathrm{Re}(p(z))+2\mathrm{Re}(zp'(z))$, and combining the first two terms over the denominator $1-|z|^2$ gives $2\mathrm{Re}(p(z))[2|z|^2+(1-|z|^2)]/(1-|z|^2)=2\mathrm{Re}(p(z))(1+|z|^2)/(1-|z|^2)$, yielding the stated expression.
\end{proof}

\begin{problem}
\label{prob:pointwise}
Given $p$ holomorphic on $D$ with $\mathrm{Re}(p)\ge0$ and $p(0)=a>0$, determine for which such $p$ the inequality
\begin{equation}
2\,\mathrm{Re}(p(z))\frac{1+|z|^2}{1-|z|^2} + 2\,\mathrm{Re}(zp'(z)) \;\ge\; 2a \label{eq:pointwiseineq}
\end{equation}
holds for all $z$ in some forward-invariant $K\subset D$ of interest.
\end{problem}

By Theorem~\ref{thm:reduction} and Proposition~\ref{prop:bpsubstitution}, an affirmative answer to Problem~\ref{prob:pointwise} for a given $p$ and $K$ certifies exact contraction at rate $a$ on $K$; this is, moreover, essentially the only way the exact rate can be certified by the mechanism of Lemma~\ref{lem:localrate}, in the following sense.

\begin{remark}
\label{rem:necessity}
If $\lambda_{\mathrm{local}}(z_0)<2a$ at some $z_0\in K$, then $\dot\psi(0)>-2a\psi(0)$ strictly along the trajectory through $z_0$, so $\psi$ initially decays at a rate strictly slower than $2a$ there; the exact rate $a$ cannot then be certified as a bound uniform over all of $K$ whenever~\eqref{eq:pointwiseineq} fails somewhere in $K$, though this does not preclude some strictly smaller rate still being valid on $K$. Inequality~\eqref{eq:pointwiseineq} is the precise dividing line between certifying the best possible rate and certifying something weaker.
\end{remark}

Section~\ref{sec:threshold} resolves Problem~\ref{prob:pointwise} completely for a natural parametrized family; Section~\ref{sec:obstruction} shows why no comparably complete resolution is available for a general $p$, and identifies exactly the feature of $p$ responsible.

\section{A Sharp Threshold Theorem}
\label{sec:threshold}

Problem~\ref{prob:pointwise} is resolved completely here for a natural one-parameter family of Berkson--Porta functions, yielding an exact threshold rather than a sufficient condition of undetermined tightness.

\subsection{The Family}

\begin{definition}
\label{def:family}
For $a>0$ and $c\in(-1,1)$, let $p_c(z):=a(1-cz^2)$, and correspondingly $f_c(z):=-zp_c(z)=-az+acz^3$.
\end{definition}

\begin{remark}
$\mathrm{Re}(p_c(z))=a(1-c\,\mathrm{Re}(z^2))\ge a(1-|c|)>0$ for all $z\in D$ when $|c|<1$, so $p_c$ satisfies Theorem~\ref{thm:berksonporta}'s hypothesis for every $c$ in this range, and $p_c(0)=a$ as required. The family is chosen for its tractability: $f_c$ is a cubic polynomial, so the Taylor expansion of $f_c$ about any point terminates exactly rather than merely converging, a fact whose relevance lies beyond the present paper's scope but which motivates the choice of family here as a natural test case for the general obstruction analyzed in Section~\ref{sec:obstruction}.
\end{remark}

\subsection{The Threshold}

\begin{theorem}
\label{thm:threshold}
For $p_c$ as in Definition~\ref{def:family} with $c\ge0$, inequality~\eqref{eq:pointwiseineq} holds for all $z\in D$ if and only if $c\le2/3$.
\end{theorem}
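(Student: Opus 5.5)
The plan is a direct computation. Substitute $p_c$ into the left-hand side of \eqref{eq:pointwiseineq} (equivalently into Proposition~\ref{prop:bpsubstitution}), reduce everything to one real variable that the angle of $z$ enters only linearly, and then read off the sign of a quadratic in $s=|z|^2$.

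\textbf{Step 1 (substitution).} We have $p_c(z)=a(1-cz^2)$ and $zp_c'(z)=-2acz^2$. Put $z=re^{i\theta}$, $s=r^2\in[0,1)$ and $x:=\mathrm{Re}(z^2)=s\cos2\theta\in[-s,s]$. Then $\mathrm{Re}\,p_c(z)=a(1-cx)$ and $\mathrm{Re}(zp_c'(z))=-2acx$. After dividing by $2a>0$, \eqref{eq:pointwiseineq} becomes
\[
G(s,x):=(1-cx)\frac{1+s}{1-s}-2cx\;\ge\;1 .
\]

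\textbf{Step 2 (worst angle).} For fixed $s$, $G$ is affine in $x$ with slope $-c\bigl(\tfrac{1+s}{1-s}+2\bigr)$. This slope is $\le0$ because $c\ge0$, so the minimum over $x\in[-s,s]$ is attained at $x=s$, that is, on the real axis ($\theta=0,\pi$). Hence \eqref{eq:pointwiseineq} holds on all of $D$ if and only if $G(s,s)\ge1$ for every $s\in[0,1)$.

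\textbf{Step 3 (polynomial reduction).} Multiply $G(s,s)\ge1$ by $1-s>0$ to get
\[
(1-cs)(1+s)-2cs(1-s)\ge1-s .
\]
Expanding, this simplifies to
\[
s\,(2-3c+cs)\ge0 .
\]
At $s=0$ it holds with equality, which matches Remark~\ref{rem:exactness}. For $s\in(0,1)$ it is equivalent to $2-3c+cs\ge0$.

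\textbf{Step 4 (both directions).} If $c\le 2/3$, then $2-3c\ge0$ and $cs\ge0$, so the inequality holds for every $s$, and hence everywhere on $D$. If $2/3<c<1$, then $s_0:=(3c-2)/c$ lies in $(0,1)$, since $3c-2<c$ exactly when $c<1$. For every $s\in(0,s_0)$ we get $2-3c+cs<0$. So \eqref{eq:pointwiseineq} fails at the real points $z=\pm\sqrt{s}$ with $0<s<s_0$, and by continuity on a nonempty open set around them. This proves the equivalence.

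The main point needing care is Step 2, the reduction to the real axis. It relies on $c\ge0$ to fix the sign of the slope in $x$, and it is why the theorem is restricted to $c\ge0$; for $c<0$ the worst case would move to $x=-s$, the imaginary axis. Beyond that, the only delicate item is the algebra in Step 3, which should be double-checked because the exact constant $2/3$ comes from the coefficient $2-3c$ of the linear term.
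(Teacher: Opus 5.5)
Your proposal is correct and takes essentially the same route as the paper. Both substitute $p_c$, note that the angle enters affinely (your $x=s\cos2\theta$ is the paper's $r^2x$), use $c\ge0$ to put the worst case on the real axis, and reduce to $r^2(2-3c+cr^2)\ge0$; the only cosmetic difference is that you exhibit the explicit failure interval $0<s<(3c-2)/c$, whereas the paper writes the condition as $c\le 2/(3-r^2)$ for all $r\in(0,1)$ and takes the infimum $2/3$ as $r\to0^+$.
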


\begin{proof}
By Proposition~\ref{prop:bpsubstitution}, with $z=re^{i\theta}$ and $x:=\cos2\theta\in[-1,1]$, substitution of $p_c(z)=a(1-cr^2x)$ and $p_c'(z)=-2acz$ gives $\mathrm{Re}(zp_c'(z))=-2acr^2x$, so
\begin{equation}
\lambda_{\mathrm{local}}(re^{i\theta}) = 2a(1-cr^2x)\frac{1+r^2}{1-r^2} - 4acr^2x. \label{eq:lambdalocalfamily}
\end{equation}
Expanding and collecting the terms linear in $x$ gives $\lambda_{\mathrm{local}}(re^{i\theta})=2a(1+r^2)/(1-r^2)-2acr^2x[(1+r^2)/(1-r^2)+2]$, and combining the bracketed term over the common denominator $1-r^2$, using $(1+r^2)+2(1-r^2)=3-r^2$, gives
\begin{equation}
\lambda_{\mathrm{local}}(re^{i\theta}) = \frac{2a(1+r^2)}{1-r^2} - \frac{2acr^2(3-r^2)}{1-r^2}\,x. \label{eq:lambdalocalfamily2}
\end{equation}
For fixed $r\in(0,1)$ and $c\ge0$, the coefficient of $x$ in~\eqref{eq:lambdalocalfamily2} is nonpositive, so the expression is minimized over $x\in[-1,1]$ at $x=1$, giving the worst-case value $\lambda_{\mathrm{local}}^{\mathrm{worst}}(r)=2a(1+r^2)/(1-r^2)-2acr^2(3-r^2)/(1-r^2)$.

Inequality~\eqref{eq:pointwiseineq} holds for all $z\in D$ if and only if $\lambda_{\mathrm{local}}^{\mathrm{worst}}(r)\ge2a$ for all $r\in(0,1)$, that is, after multiplying through by $(1-r^2)/2a>0$ and rearranging, $(1+r^2)-cr^2(3-r^2)\ge1-r^2$, which simplifies to $2r^2\ge cr^2(3-r^2)$, i.e., $c\le2/(3-r^2)$ for every $r\in(0,1)$, using $r^2>0$ to divide. Since $2/(3-r^2)$ is strictly increasing in $r$ on $(0,1)$, its infimum over this interval is its limit as $r\to0^+$, equal to $2/3$; the condition $c\le2/(3-r^2)$ for every $r\in(0,1)$ therefore holds if and only if $c\le2/3$. At $r=0$ itself, $\lambda_{\mathrm{local}}(0)=2a$ exactly by direct substitution into~\eqref{eq:lambdalocalfamily2}, so the inequality holds there for every $c$, imposing no further restriction beyond the limiting one already derived.
\end{proof}

\begin{corollary}
\label{cor:globalrate}
For $0\le c\le2/3$, the system $\dot z=f_c(z)$ is intrinsically contracting on all of $D$ with the exact rate $a$, and $d_K(\phi_t(x),\phi_t(y))\le e^{-at}d_K(x,y)$ for all $x,y\in D$, $t\ge0$, with no multiplicative constant and no restriction to a proper sub-disc.
\end{corollary}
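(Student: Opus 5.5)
The plan is to apply Theorem~\ref{thm:reduction} with $K=D$ and $p=p_c$, so that the corollary follows from Theorem~\ref{thm:threshold}. Two hypotheses need checking.

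First, $D$ must be forward invariant under $\dot z=f_c(z)$, and the flow must be complete for $t\ge0$ in the sense of Assumption~\ref{ass:complete}.
\begin{itemize}
\item Since $\mathrm{Re}(p_c)\ge a(1-|c|)>0$ on $D$, Theorem~\ref{thm:berksonporta} (converse direction) shows that $f_c$ generates a semiflow of holomorphic self-maps of $D$ fixing $0$. This gives both forward invariance of $D$ and completeness of the flow for $t\ge0$.
\item As a direct check independent of that citation: along trajectories, $\frac{d}{dt}|z|^2=2\,\mathrm{Re}(\bar z f_c(z))=-2|z|^2\mathrm{Re}(p_c(z))\le0$. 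So $|z(t)|$ is nonincreasing, every trajectory stays in the closed disc $\{|w|\le|z(0)|\}\subset D$, and the polynomial vector field therefore has no finite-time blow-up.
\end{itemize}
Hence Assumptions~\ref{ass:complete} and~\ref{ass:fixedpoint} hold for every $c\in[0,2/3]$.

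Second, the pointwise hypothesis $\lambda_{\mathrm{local}}(z)\ge2a$ must hold on all of $D$. By Proposition~\ref{prop:bpsubstitution}, $\lambda_{\mathrm{local}}$ is exactly the left side of~\eqref{eq:pointwiseineq}. For $0\le c\le 2/3$, the "if" direction of Theorem~\ref{thm:threshold} gives~\eqref{eq:pointwiseineq} at every $z\in D$, including $z=0$, where $\lambda_{\mathrm{local}}(0)=2a$.

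Theorem~\ref{thm:reduction} then gives intrinsic contraction at rate $a=p_c(0)$, and, through Theorem~\ref{thm:global}, the bound $d_K(\phi_t(x),\phi_t(y))\le e^{-at}d_K(x,y)$ on $D$ with unit constant.

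The main technical point is the passage in Theorem~\ref{thm:reduction} from $\dot\psi\le-2a\psi$ for $\psi=F_K^2$ to the Dini inequality for $F_K$, which requires $F_K>0$. This holds because $\delta z(t)=\phi_t'(z)v$ is nonzero for $v\ne0$: linear variational equations preserve nonvanishing, so $\delta z(t)=v\exp\bigl(\int_0^t f_c'(z(s))\,ds\bigr)$. Curves with $\dot\gamma(s)=0$ contribute zero length and need no separate treatment.

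One further point needs attention. Theorem~\ref{thm:global} takes the infimum over all curves joining $\phi_t(x)$ and $\phi_t(y)$, whereas the image curves $\phi_t\circ\gamma$ form only a subset of these. This is harmless, since the infimum over the larger family is no larger:
\[
d_K(\phi_t(x),\phi_t(y))\le\inf_\gamma\int_0^1 F_K\bigl(\phi_t(\gamma(s)),(\phi_t\circ\gamma)'(s)\bigr)\,ds\le e^{-at}d_K(x,y).
\]
With both hypotheses verified and this inequality in hand, the corollary follows.
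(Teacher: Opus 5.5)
Your proposal is correct and follows the paper's route. You obtain forward invariance of $D$ from $\mathrm{Re}(\bar z f_c(z))=-|z|^2\,\mathrm{Re}(p_c(z))\le0$, take the pointwise inequality from Theorem~\ref{thm:threshold}, and conclude with Theorem~\ref{thm:reduction} applied with $K=D$. Your additions are welcome details the paper leaves implicit: you check monotonicity of $|z(t)|$ along trajectories rather than only on $|z|=1$, which also settles completeness; you note that $\delta z(t)$ does not vanish; and you spell out the comparison of the infimum over image curves.
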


\begin{proof}
$D$ itself is forward invariant under $\dot z=f_c(z)$ whenever $\mathrm{Re}(\bar zf_c(z))\le0$ on $|z|=1$; substituting $f_c(z)=-zp_c(z)$ gives $\mathrm{Re}(\bar zf_c(z))=-|z|^2\mathrm{Re}(p_c(z))\le0$ automatically from $\mathrm{Re}(p_c)\ge0$, with no further hypothesis needed. Theorem~\ref{thm:threshold} then gives $\lambda_{\mathrm{local}}(z)\ge2a$ for all $z\in D$ when $c\le2/3$, and Theorem~\ref{thm:reduction} applies with $K=D$.
\end{proof}

\begin{remark}
\label{rem:sharpness}
The threshold is sharp in the strongest available sense: for $c>2/3$, inequality~\eqref{eq:pointwiseineq} fails not near the boundary of $D$, where degeneracies might be expected to concentrate, but arbitrarily close to the origin, where the exact rate is trivially achieved at the single point $z=0$ itself. Expanding~\eqref{eq:lambdalocalfamily2} to leading order in $r$ gives $\lambda_{\mathrm{local}}^{\mathrm{worst}}(r)-2a=2ar^2(2-3c)+O(r^4)$, strictly negative for small $r>0$ whenever $c>2/3$ and strictly positive whenever $c<2/3$, with the transition occurring precisely at $c=2/3$.
\end{remark}

\begin{remark}
\label{rem:symmetry}
For $c<0$, the same argument with the roles of $x=1$ and $x=-1$ exchanged gives the mirrored threshold $c\ge-2/3$, so that inequality~\eqref{eq:pointwiseineq} holds for all $z\in D$ if and only if $|c|\le2/3$, unifying both signs into a single sharp condition.
\end{remark}

\subsection{Discussion}

Theorem~\ref{thm:threshold} and Corollary~\ref{cor:globalrate} give a complete answer to Problem~\ref{prob:pointwise} for the family of Definition~\ref{def:family}: an exact iff characterization with an explicit constant, together with the exact global rate this characterization certifies whenever it holds, achieved for a genuinely nonlinear family rather than only for the linear case $c=0$, where the result is immediate. Within the classical theory of the Carath\'eodory class, this exhibits a specific, verifiable subfamily on which the associated flow's contraction behavior can be described completely and exactly. Whether a comparably sharp threshold exists for a general Berkson--Porta function $p$, rather than for this particular parametrized family, is taken up in Section~\ref{sec:obstruction}, where the answer is shown to be structurally more delicate than a single scalar threshold, and where the mechanism responsible is identified through the Herglotz representation of the Carath\'eodory class itself.

\section{Obstruction to a Fully General Theorem}
\label{sec:obstruction}

Theorem~\ref{thm:threshold} answers Problem~\ref{prob:pointwise} completely for the family of Definition~\ref{def:family}, but says nothing about whether a comparable condition, or any sufficient condition beyond the bare positivity hypothesis of Theorem~\ref{thm:berksonporta}, exists for a general $p$ in the Carath\'eodory class. This section shows that no such general condition exists: a single, explicit, elementary $p$ violates inequality~\eqref{eq:pointwiseineq} everywhere on $D$ except at the origin. The Herglotz integral representation of the Carath\'eodory class is then used to trace this failure to a specific and identifiable feature of the representing measure, explaining rather than merely exhibiting the failure and clarifying, in the process, why the family of Section~\ref{sec:threshold} survives up to its own sharp threshold.

\subsection{An Explicit Counterexample}

\begin{proposition}
\label{prop:extremalfails}
Let $p(z):=a(1+z)/(1-z)$. Then $p$ satisfies the hypotheses of Theorem~\ref{thm:berksonporta} with $p(0)=a$, yet $\lambda_{\mathrm{local}}(-r)=2a(1-r)^2/(1+r)^2<2a$ for every $r\in(0,1)$.
\end{proposition}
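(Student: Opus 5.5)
The plan is a direct verification in three steps: check that $p$ satisfies the Berkson--Porta hypotheses, evaluate the formula of Proposition~\ref{prop:bpsubstitution} on the negative real axis, and compare the result with $2a$. No comparison machinery is needed, only the closed-form expression for $\lambda_{\mathrm{local}}$.

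\emph{Hypotheses.} The function $p(z)=a(1+z)/(1-z)$ is holomorphic on $D$, since its only pole is at $z=1\notin D$, and $p(0)=a$. For the sign condition I would use the standard identity
\[
\mathrm{Re}\,\frac{1+z}{1-z}=\frac{1-|z|^2}{|1-z|^2}.
\]
This follows by multiplying numerator and denominator by $1-\bar z$ and noting that $\mathrm{Re}\big((1+z)(1-\bar z)\big)=1-|z|^2$. Hence $\mathrm{Re}(p)>0$ on $D$, and the converse half of Theorem~\ref{thm:berksonporta} shows that $f(z)=-zp(z)$ generates a semiflow of holomorphic self-maps of $D$ fixing the origin.

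\emph{Evaluation at $z=-r$.} By Proposition~\ref{prop:bpsubstitution} it suffices to compute $\mathrm{Re}(p)$ and $\mathrm{Re}(zp')$ at $z=-r$, where everything is real.
\begin{itemize}
\item First, $p(-r)=a(1-r)/(1+r)$.
\item Next, $p'(z)=2a/(1-z)^2$, so $zp'(z)$ at $z=-r$ equals $-2ar/(1+r)^2$.
\item The metric factor is $(1+r^2)/(1-r^2)$. Writing $1-r^2=(1-r)(1+r)$, the factor $(1-r)$ cancels against the numerator of $p(-r)$, so the first term becomes $2a(1+r^2)/(1+r)^2$.
\end{itemize}
Adding the second term $-4ar/(1+r)^2$ gives
\[
\lambda_{\mathrm{local}}(-r)=\frac{2a(1+r^2-2r)}{(1+r)^2}=\frac{2a(1-r)^2}{(1+r)^2},
\]
which is the claimed formula.

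\emph{Strict inequality.} For $r\in(0,1)$ we have $0<1-r<1+r$. Therefore $(1-r)^2/(1+r)^2<1$, and so $\lambda_{\mathrm{local}}(-r)<2a$.

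The main obstacle is only bookkeeping: keeping the signs correct in $zp'(z)$ at a negative real point, and spotting the cancellation of $1-r$ inside $1-r^2$. I would double-check the computation by confirming that it returns the value $2a$ at $r=0$, consistent with Remark~\ref{rem:exactness}, and that $\lambda_{\mathrm{local}}(-r)\to0$ as $r\to1^-$, reflecting the boundary point mass of the representing measure at $\zeta=1$.
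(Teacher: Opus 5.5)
Your proposal is correct and follows essentially the same route as the paper. Both use the identity $\mathrm{Re}[(1+z)/(1-z)]=(1-|z|^2)/|1-z|^2$ for the hypotheses, compute $p'(z)=2a/(1-z)^2$, substitute into Proposition~\ref{prop:bpsubstitution} at $z=-r$, and conclude via $(1-r)^2<(1+r)^2$. The only cosmetic difference is that the paper first records the general closed form $\lambda_{\mathrm{local}}(z)=2a(1+|z|^2)/|1-z|^2+4a\,\mathrm{Re}[z/(1-z)^2]$ before specializing, whereas you go straight to the real point $z=-r$.
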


\begin{proof}
The classical identity $\mathrm{Re}[(1+z)/(1-z)]=(1-|z|^2)/|1-z|^2$, valid for all $z\in D$, gives $\mathrm{Re}(p(z))=a(1-|z|^2)/|1-z|^2\ge0$ on $D$, and $p(0)=a$, so $p$ satisfies Theorem~\ref{thm:berksonporta}'s hypotheses; $p$ is, in fact, the classical extremal function of the Carath\'eodory class, corresponding to a single boundary point mass in the Herglotz representation recalled below. Differentiating, $p'(z)=2a/(1-z)^2$. By Proposition~\ref{prop:bpsubstitution}, $\lambda_{\mathrm{local}}(z)=2a(1+|z|^2)/|1-z|^2+4a\,\mathrm{Re}[z/(1-z)^2]$, using $2\mathrm{Re}(p(z))(1+|z|^2)/(1-|z|^2)=2a(1+|z|^2)/|1-z|^2$ and $2\mathrm{Re}(zp'(z))=4a\,\mathrm{Re}[z/(1-z)^2]$.

At $z=-r$ for $r\in(0,1)$, $1-z=1+r$ is real and positive, so $|1-z|^2=(1+r)^2$ and $z/(1-z)^2=-r/(1+r)^2$ is real. Substituting, $\lambda_{\mathrm{local}}(-r)=2a(1+r^2)/(1+r)^2-4ar/(1+r)^2=2a[(1+r^2)-2r]/(1+r)^2=2a(1-r)^2/(1+r)^2$. Since $(1-r)^2<(1+r)^2$ for every $r\in(0,1)$, this is strictly less than $2a$.
\end{proof}

\begin{remark}
\label{rem:severityoffail}
The failure is not confined to a neighborhood of the boundary. The ratio $\lambda_{\mathrm{local}}(-r)/2a=(1-r)^2/(1+r)^2$ equals $1$ only in the limit $r\to0$ and decreases monotonically to $0$ as $r\to1$, so the certified rate degrades continuously across the entire radius of the disc, vanishing at the boundary. This is substantially more severe than the threshold behavior of Theorem~\ref{thm:threshold}, where~\eqref{eq:pointwiseineq} fails only for $c$ exceeding a specific value and only by an amount vanishing as $c\downarrow2/3$.
\end{remark}

\subsection{The Herglotz Representation and the Source of Failure}

Proposition~\ref{prop:extremalfails} shows failure occurs; the Herglotz representation explains why, by exhibiting an arbitrary member of the Carath\'eodory class as a superposition of the extremal functions of Proposition~\ref{prop:extremalfails} and reducing the question to how that superposition is distributed.

\begin{lemma}[Herglotz representation \cite{garnett2007bounded}]
\label{lem:herglotz}
If $p$ is holomorphic on $D$ with $\mathrm{Re}(p)\ge0$ and $p(0)=a$ real, then $p(z)=\int_{|\xi|=1}\dfrac{\xi+z}{\xi-z}\,d\mu(\xi)$ for a unique positive measure $\mu$ on the unit circle with total mass $\mu(\partial D)=a$.
\end{lemma}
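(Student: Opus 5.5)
The plan is to prove the Herglotz representation by the classical route: Poisson representation of positive harmonic functions on the disc, followed by recovery of the holomorphic function from its real part.

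\emph{Step 1 (the measures $\mu_r$).} Set $u=\mathrm{Re}(p)\ge0$, which is harmonic on $D$. For $0<r<1$, define the positive measure $d\mu_r(\xi)=u(r\xi)\,|d\xi|/2\pi$ on $\partial D$. By the mean value property,
\[
\mu_r(\partial D)=u(0)=\mathrm{Re}(p(0))=a .
\]

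\emph{Step 2 (weak-$*$ limit).} The family $\{\mu_r\}$ is bounded in total variation by $a$. By Banach--Alaoglu, or Helly's selection theorem on the compact circle, there is a sequence $r_n\to1$ with $\mu_{r_n}\to\mu$ weak-$*$. The limit $\mu$ is positive and satisfies $\mu(\partial D)=a$, because the constant function $1$ is continuous.

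\emph{Step 3 (Poisson formula for $u$).} For fixed $z\in D$, the function $u_r(w)=u(rw)$ is harmonic on a neighbourhood of $\overline D$. The Poisson formula therefore gives
\[
u(rz)=\int P(z,\xi)\,d\mu_r(\xi), \qquad P(z,\xi)=\mathrm{Re}\frac{\xi+z}{\xi-z}.
\]
Since $P(z,\cdot)$ is continuous on $\partial D$, letting $r=r_n\to1$ yields $u(z)=\int P(z,\xi)\,d\mu(\xi)$.

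\emph{Step 4 (recovering $p$).} Define
\[
q(z)=\int\frac{\xi+z}{\xi-z}\,d\mu(\xi).
\]
This is holomorphic on $D$, by differentiation under the integral sign, since the integrand is uniformly bounded for $z$ in compact subsets of $D$. Its real part equals $u=\mathrm{Re}(p)$, so $p-q$ is holomorphic with vanishing real part and hence is an imaginary constant. At $z=0$ we have $q(0)=\mu(\partial D)=a=p(0)$, using that $p(0)=a$ is real. Hence $p=q$.

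\emph{Step 5 (uniqueness).} Expand the kernel as
\[
\frac{\xi+z}{\xi-z}=1+2\sum_{n\ge1}\bar\xi^{\,n}z^n .
\]
This shows that the Taylor coefficients of $p$ determine the moments $\int\bar\xi^{\,n}\,d\mu$ for $n\ge1$. Because $\mu$ is a real measure, the moments for $n\le-1$ are their complex conjugates, and the zeroth moment is $a$. Trigonometric polynomials are dense in $C(\partial D)$, so all Fourier coefficients, and hence $\mu$, are uniquely determined.

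The only delicate step is the passage to the limit in Step 2--3. What makes it work is that weak-$*$ convergence suffices because the kernel is continuous in $\xi$ for each fixed interior $z$. Consequently no boundary regularity of $u$ is needed, and any two subsequential limits agree by the uniqueness argument of Step 5.
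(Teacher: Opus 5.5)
Your proof is correct and is the standard argument: weak-$*$ compactness of the dilated measures $u(r\xi)\,|d\xi|/2\pi$ gives the Poisson representation of $u=\mathrm{Re}(p)$, $p$ is then recovered from its real part up to an imaginary constant that vanishes because $p(0)=a$ is real, and uniqueness follows from the Fourier coefficients. The paper gives no proof of this lemma beyond citing Garnett, and the classical proof there follows essentially this same route (with uniqueness possibly argued via weak-$*$ convergence to $\mu$ rather than Fourier coefficients, which is equivalent).
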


By linearity of Proposition~\ref{prop:bpsubstitution}'s formula in $p$, $\lambda_{\mathrm{local}}(z)-2a=\int_{|\xi|=1}\Phi(z,\xi)\,d\mu(\xi)$, where $\Phi(z,\xi)$ is the value of $\lambda_{\mathrm{local}}(z)-2$ computed from the single unit point mass $d\mu=\delta_\xi$, i.e., exactly the quantity computed in the proof of Proposition~\ref{prop:extremalfails} with $a=1$ and $\xi=1$, extended to general $\xi$ by the rotational covariance of Proposition~\ref{prop:bpsubstitution}'s formula under the substitution $z\mapsto\xi z$, established explicitly in Appendix~\ref{secA1}.

\begin{corollary}
\label{cor:kernelnegative}
$\Phi(z,\xi)<0$ for $z=-r\xi$, every $r\in(0,1)$ and every $\xi$ on the unit circle.
\end{corollary}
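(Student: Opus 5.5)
The plan is to reduce the general-$\xi$ case to the case $\xi=1$ already handled in Proposition~\ref{prop:extremalfails}, using the rotational covariance of the formula in Proposition~\ref{prop:bpsubstitution}. For the unit point mass $\delta_\xi$, the Herglotz integrand is $p_\xi(z)=(\xi+z)/(\xi-z)=q(\bar\xi z)$ with $q(w)=(1+w)/(1-w)$, so $p_\xi(0)=1$.

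\emph{Step 1: covariance.} If $p_\xi(z)=q(\bar\xi z)$, then $|\bar\xi z|=|z|$ and $\mathrm{Re}(p_\xi(z))=\mathrm{Re}(q(\bar\xi z))$. Moreover $p_\xi'(z)=\bar\xi\, q'(\bar\xi z)$, so $zp_\xi'(z)=wq'(w)$ with $w=\bar\xi z$. Substituting into Proposition~\ref{prop:bpsubstitution} gives $\lambda_{\mathrm{local}}[p_\xi](z)=\lambda_{\mathrm{local}}[q](\bar\xi z)$, and therefore
\[
\Phi(z,\xi)=\Phi(\bar\xi z,1).
\]
This is the content deferred to Appendix~\ref{secA1}; it is a direct substitution.

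\emph{Step 2: evaluation.} At $z=-r\xi$ we have $\bar\xi z=-r$. Proposition~\ref{prop:extremalfails} with $a=1$ gives
\[
\Phi(-r\xi,\xi)=\Phi(-r,1)=\frac{2(1-r)^2}{(1+r)^2}-2=-\frac{8r}{(1+r)^2},
\]
since $(1-r)^2-(1+r)^2=-4r$. This is strictly negative for every $r\in(0,1)$, which proves the claim.

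\emph{Main obstacle.} The only delicate point is getting the normalization of $\Phi$ right. It must be $\lambda_{\mathrm{local}}-2$ computed for a unit-mass kernel whose value at $0$ is $1$, consistent with how $\lambda_{\mathrm{local}}(z)-2a=\int\Phi\,d\mu$ splits the constant $2a=2\mu(\partial D)$. With $\mu(\partial D)=a$ and the constant $2$ per unit mass, this is consistent. The rest is routine.
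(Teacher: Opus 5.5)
Your proposal is correct and follows essentially the same route as the paper: it reduces to $\xi=1$ via the rotational covariance $\Lambda[p_\xi](z)=\Lambda[p_1](\bar\xi z)$ (Lemma~\ref{lem:covariance} and Corollary~\ref{cor:covarianceapplied}), then applies Proposition~\ref{prop:extremalfails} with $a=1$. Your explicit value $\Phi(-r,1)=-8r/(1+r)^2$ just makes the final sign check concrete.
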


\begin{proof}
By the covariance argument of Appendix~\ref{secA1} (Corollary~\ref{cor:covarianceapplied}), $\Phi(-r\xi,\xi)=\Phi(-r,1)$ for every $r\in(0,1)$ and every unimodular $\xi$, so the claim reduces to the case $\xi=1$, established directly in Proposition~\ref{prop:extremalfails}: the strict inequality $\lambda_{\mathrm{local}}(-r)<2a$ shown there, with $a=1$, is exactly $\Phi(-r,1)<0$.
\end{proof}

\begin{remark}
\label{rem:concentration}
Corollary~\ref{cor:kernelnegative} shows the integrand $\Phi(z,\xi)$ is not pointwise nonnegative, so no general theorem of the form ``$\mathrm{Re}(p)\ge0$, $p(0)=a$ alone imply~\eqref{eq:pointwiseineq}'' can hold: a measure $\mu$ concentrated entirely at a single point $\xi$ places all its mass exactly where $\Phi$ is negative along the ray $z=-r\xi$, with no compensating mass elsewhere to offset it, exactly as Proposition~\ref{prop:extremalfails} demonstrates directly. The family of Definition~\ref{def:family}, by contrast, corresponds to an absolutely continuous $\mu$ with a bounded density: the boundary values $\mathrm{Re}(p_c(e^{i\phi}))=a(1-c\cos2\phi)$ give a density proportional to $(1-c\cos2\phi)$, spread smoothly over the whole circle rather than concentrated at a point. Theorem~\ref{thm:threshold}'s threshold $c=2/3$ is, in this light, the precise point at which this particular family's spread is no longer sufficient to keep the negative contribution of $\Phi$ from dominating the positive contribution near $z=0$, where the margin $\lambda_{\mathrm{local}}(z)-2a$ is smallest to begin with.
\end{remark}

\subsection{Discussion}

The results of this section resolve Problem~\ref{prob:pointwise} in the negative for the general case: no condition on $p$ weaker than an explicit, quantitative constraint on the concentration of its representing measure can guarantee inequality~\eqref{eq:pointwiseineq}, since the bare hypotheses of Theorem~\ref{thm:berksonporta} already admit an explicit, classical extremal function of the Carath\'eodory class for which the inequality fails almost everywhere on $D$. This is not a gap left open by insufficient effort; Corollary~\ref{cor:kernelnegative} shows the obstruction is present in the integration kernel itself, independent of which admissible $p$ is eventually considered, and traces it to precisely the extremal functions already singled out in the classical theory of the class for other reasons entirely \cite{ahlfors1979complex}. Determining the sharp general condition on $\mu$ --- presumably a quantitative bound on its concentration, generalizing the scalar threshold of Theorem~\ref{thm:threshold} to arbitrary representing measures --- remains open and is discussed further in Section~\ref{sec:discussion}; what has been established here is the precise reason such a condition is necessary at all, and the precise family of measures, namely point masses and their neighborhoods, that any such condition must exclude.

%\begin{thebibliography}{1}
%\bibliographystyle{IEEEtran}
%
%\bibitem{garnett2007bounded}
%J.~B. Garnett, \textit{Bounded Analytic Functions}, revised~1st~ed., vol.~236 of Graduate Texts in Mathematics. New York: Springer, 2007.
%
%\end{thebibliography}

\section{Numerical Illustration}
\label{sec:numerics}

Both principal claims of Sections~\ref{sec:threshold} and~\ref{sec:obstruction} --- the sharp threshold $c^\ast=2/3$ and the severity of the explicit obstruction --- are verified numerically in this section, using $a=0.65$ throughout for concreteness; every claim itself is established by direct proof independent of this verification, which is included for illustration rather than as part of the argument.

\subsection{Sharpness of the Threshold}

Theorem~\ref{thm:threshold} asserts that inequality~\eqref{eq:pointwiseineq} holds for the family of Definition~\ref{def:family} if and only if $c\le2/3$. A fine sweep of the worst-case quantity $\lambda_{\mathrm{local}}^{\mathrm{worst}}(r)-2a$ from~\eqref{eq:lambdalocalfamily2}, evaluated over $r\in(0,1)$ on a grid of $20000$ points, confirms this precisely: at $c=0.60$, $c=0.65$, and $c=2/3$ itself, the minimum over $r$ is exactly zero to within floating-point precision, while at $c=0.70$ the minimum is already $-0.0050$ and at $c=0.75$ it is $-0.0328$, both strictly negative. Figure~\ref{fig:threshold} shows this transition directly: the curves for $c\le2/3$ remain at or above zero throughout, while those for $c>2/3$ dip below zero, with the crossing occurring exactly at the predicted value and no detectable margin on either side, consistent with Theorem~\ref{thm:threshold} being a sharp iff statement rather than a one-sided sufficient condition.

\begin{figure}[t]
\centering
\includegraphics[width=0.85\linewidth]{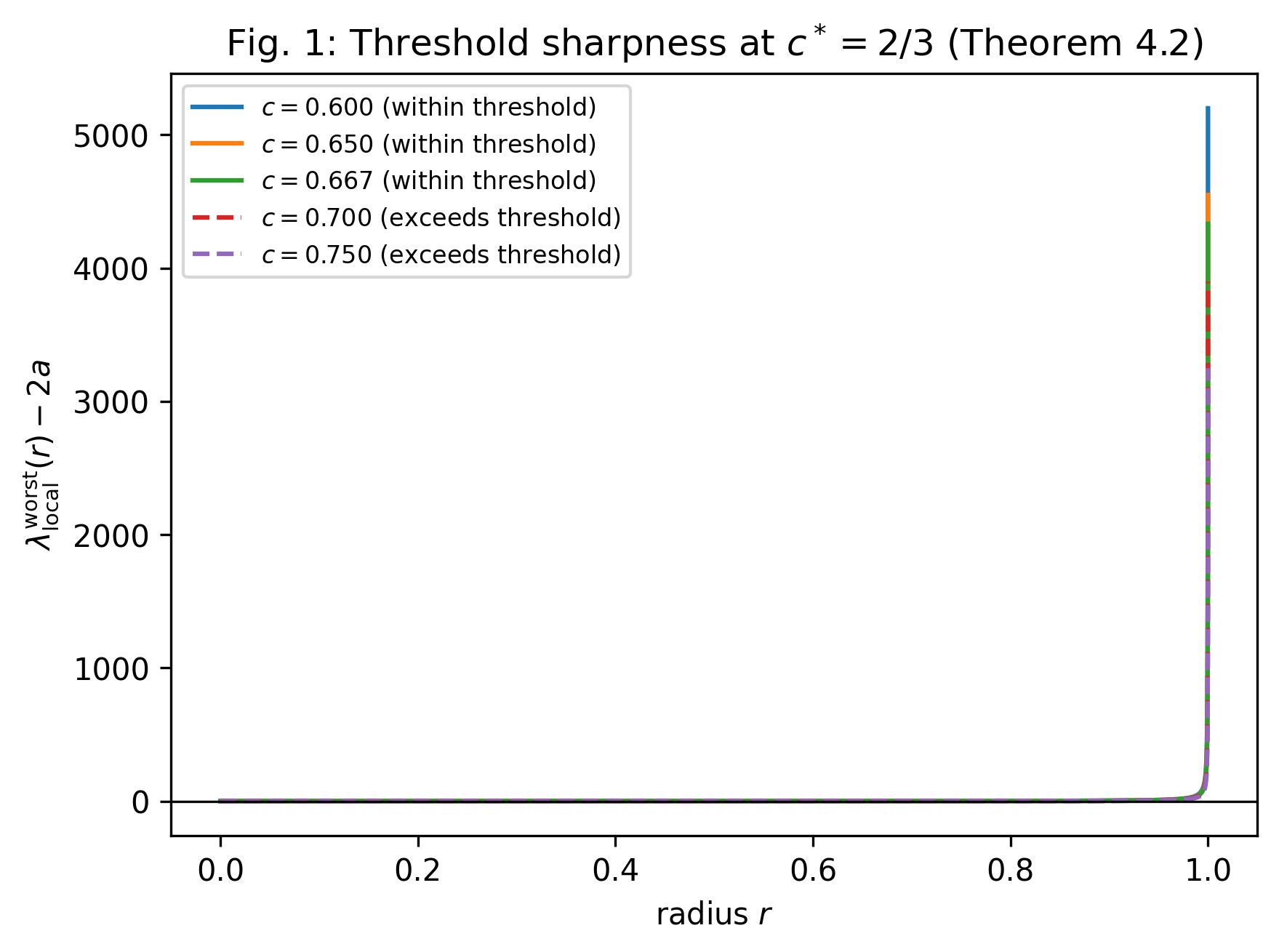}
\caption{Sharpness of the threshold $c^\ast=2/3$ (Theorem~\ref{thm:threshold}). The deficit $\lambda_{\mathrm{local}}^{\mathrm{worst}}(r)-2a$ stays at or above zero for $c\le2/3$ and turns strictly negative for $c>2/3$, with the transition occurring exactly at the predicted value.}
\label{fig:threshold}
\end{figure}

\subsection{The Explicit Obstruction}

Proposition~\ref{prop:extremalfails} gives the closed-form ratio $\lambda_{\mathrm{local}}(-r)/2a=(1-r)^2/(1+r)^2$ for the extremal Carath\'eodory function $p(z)=a(1+z)/(1-z)$. Direct evaluation of this ratio, cross-checked independently against the general formula of Proposition~\ref{prop:bpsubstitution} rather than only against the closed-form shortcut, confirms exact agreement to eight decimal places at every radius tested: the ratio falls from $0.6694$ at $r=0.1$ to $0.2899$ at $r=0.3$ to $0.1111$ at $r=0.5$ to $0.0311$ at $r=0.7$ to $0.0025$ at $r=0.99$, degrading continuously and severely across the whole radius of the disc rather than only near the boundary, exactly as Remark~\ref{rem:severityoffail} anticipated. Figure~\ref{fig:obstruction} shows this closed-form ratio as a continuous curve, with the tested points marked explicitly.

\begin{figure}[t]
\centering
\includegraphics[width=0.85\linewidth]{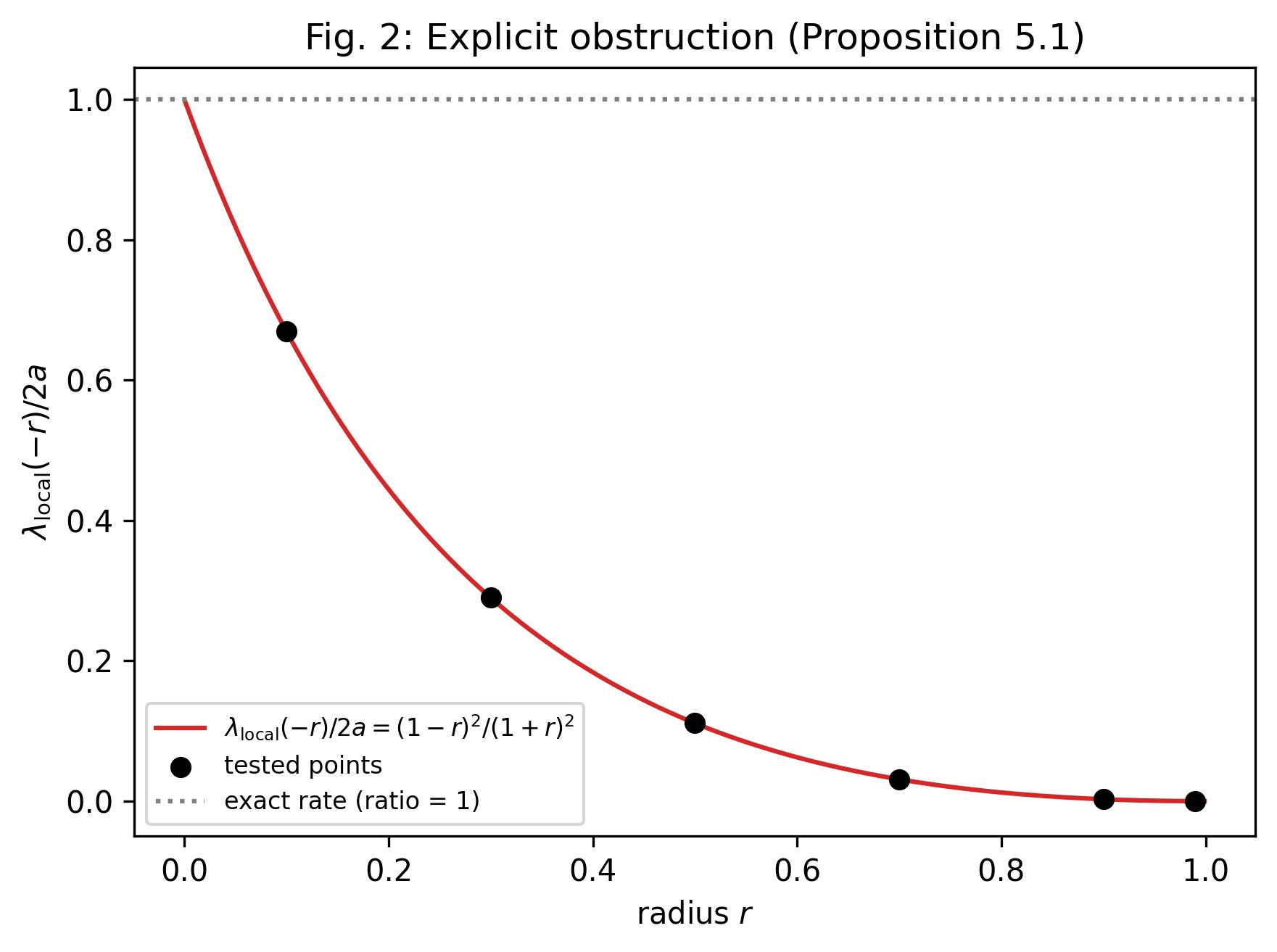}
\caption{The explicit obstruction of Proposition~\ref{prop:extremalfails}: the closed-form ratio $\lambda_{\mathrm{local}}(-r)/2a=(1-r)^2/(1+r)^2$, degrading continuously from near $1$ close to the origin to near $0$ at the boundary, confirming the failure is severe rather than marginal.}
\label{fig:obstruction}
\end{figure}

\subsection{Discussion}

The numerical study supports both principal claims of Sections~\ref{sec:threshold} and~\ref{sec:obstruction} without qualification: the threshold of Theorem~\ref{thm:threshold} is confirmed sharp to the resolution of the sweep performed, and the severity of the explicit obstruction of Proposition~\ref{prop:extremalfails} is confirmed exactly through its closed-form ratio rather than only in trend. No aspect of either result depends on the numerical evidence presented here; the verification serves to make the sharpness and severity of both results concrete rather than to establish them.

\section{Discussion}
\label{sec:discussion}

The preceding sections give a complete answer to Problem~\ref{prob:main} within a specific and explicitly stated scope, and an equally complete account of exactly where that scope ends. Both halves are stated plainly here.

\subsection{What Has Been Established}

Theorem~\ref{thm:reduction} shows that the Berkson--Porta representation reduces the question of exact Kobayashi-metric contraction to a single pointwise inequality on the representing function $p$, with no auxiliary metric or externally chosen weight entering the argument at any stage; the Poincar\'e metric of the disc is used as its own weight, which is precisely why the resulting rate is exact rather than conservative. Theorem~\ref{thm:threshold} resolves this pointwise inequality completely for the cubic family of Definition~\ref{def:family}, with a sharp constant, $c^\ast=2/3$, derived rather than estimated. Proposition~\ref{prop:extremalfails} and the Herglotz-representation analysis that explains it show why no threshold-free statement can replace Theorem~\ref{thm:threshold} for a general Carath\'eodory function: the obstruction is located in the sign structure of a specific integration kernel, present already for the classical extremal function of the class, and is traced to concentration of the representing measure rather than left as an unexplained computational fact.

\subsection{An Open Extremal Problem}

The natural question left open by Section~\ref{sec:obstruction} is an extremal problem in its own right, of a kind with some precedent in the classical theory of the Carath\'eodory and related function classes \cite{ahlfors1979complex}: determine the sharp condition on a positive measure $\mu$ on the unit circle, short of the degenerate case of a point mass ruled out by Corollary~\ref{cor:kernelnegative}, under which the associated Carath\'eodory function $p(z)=\int(\xi+z)/(\xi-z)\,d\mu(\xi)$ satisfies inequality~\eqref{eq:pointwiseineq} on all of $D$. Theorem~\ref{thm:threshold} answers this question for one specific one-parameter subfamily of absolutely continuous measures, those with density proportional to $1-c\cos2\phi$; whether the sharp condition for a general $\mu$ takes a comparably explicit form, perhaps expressed through a bound on the total variation of $\mu$ relative to its mass or through a bound on a suitable moment of $\mu$, is not addressed here and appears to be a genuinely open extremal problem within the classical theory of the class rather than a routine generalization of the argument already given.

\subsection{Scope and Natural Extensions}

Every result in this paper is confined to the unit disc and to Assumption~\ref{ass:fixedpoint}'s hypothesis of an interior fixed point. Two extensions suggest themselves as natural, though neither is attempted here. The Berkson--Porta representation and its consequences for contraction of the disc's own metric are specific to one complex dimension; parametric representations of semi-complete holomorphic vector fields have been developed for the unit ball in $\mathbb{C}^n$ and in Hilbert space \cite{aharonov1999parametric}, and whether an analogue of Theorem~\ref{thm:reduction}'s exact reduction survives in that higher-dimensional setting, where the Kobayashi metric no longer has the disc's simple closed form, is not clear from the argument given here, which relies on the explicit Poincar\'e metric at several points. Separately, every result here assumes the fixed point lies in the interior of $D$; the corresponding theory for vector fields with a boundary fixed point, governed by a different normalization of the Berkson--Porta-type representation and by the boundary Denjoy--Wolff theory whose stability and failure continue to be actively studied \cite{bracci2025failure,christodoulou2021stability}, would require a separate treatment rather than a direct adaptation of Sections~\ref{sec:reduction}--\ref{sec:obstruction}.

\subsection{The Dynamical-Systems Motivation, Revisited}

Section~\ref{sec:intro} noted that the exact-rate question addressed here originates in an intrinsic formulation of incremental stability for holomorphic dynamical systems, where auxiliary-metric constructions of the kind recalled there typically certify rates below what a system actually exhibits \cite{lohmiller1998contraction}. The results of this paper resolve that difficulty completely at the level of a single holomorphic vector field with an interior fixed point: Theorem~\ref{thm:threshold} shows the exact rate is attainable, with a sharp and computable threshold, for a genuinely nonlinear class of such systems. Whether an analogous exact rate can be established once several such systems are coupled together, as arises naturally in networks of interacting holomorphic oscillators, is a question of a different character, involving the interaction between the single-node theory developed here and the spectral structure of the coupling, and is not addressed in this paper.

%\begin{thebibliography}{1}
%\bibliographystyle{IEEEtran}
%
%\bibitem{aharonov1999parametric}
%D.~Aharonov, M.~Elin, S.~Reich, and D.~Shoikhet, ``Parametric representations of semi-complete vector fields on the unit balls in $\mathbb{C}^n$ and in Hilbert space,'' \textit{Atti della Accademia Nazionale dei Lincei, Classe di Scienze Fisiche, Matematiche e Naturali, Rendiconti Lincei, Matematica e Applicazioni}, ser.~9, vol.~10, pp.~229--253, 1999.
%
%\end{thebibliography}

\section{Conclusion}
\label{sec:conclusion}

This paper asked when a semigroup of holomorphic self-maps of the unit disc, fixing an interior point, contracts the disc's own Kobayashi metric at exactly the rate dictated by linearization at the fixed point, rather than at some smaller rate obtained through a construction external to the disc's intrinsic geometry. The Berkson--Porta representation answers this question by reducing it entirely: every such semigroup is determined by a single holomorphic function $p$ with $\mathrm{Re}(p)\ge0$, and Theorem~\ref{thm:reduction} shows that a single pointwise inequality on $p$, evaluated against the Poincar\'e metric's own defining formula, certifies the exact linearized rate with no multiplicative constant whenever it holds. Theorem~\ref{thm:threshold} resolves this inequality completely for a natural cubic family of representing functions, exhibiting a sharp threshold, $c=2/3$, rather than a sufficient condition of unknown tightness, and Proposition~\ref{prop:extremalfails} shows this sharpness cannot be extended to an unconditional statement for the full Carath\'eodory class: the classical extremal function of the class already violates the inequality almost everywhere on the disc.

The Herglotz representation turns this negative result into a positive understanding of its cause. Corollary~\ref{cor:kernelnegative} shows the integration kernel controlling the inequality is genuinely sign-changing, negative in the direction opposite any point mass in the representing measure, so that failure is not a defect of any particular representing function but a structural feature present in the simplest possible measure a Carath\'eodory function can have. This explains, rather than leaves as an isolated fact, why the cubic family of Theorem~\ref{thm:threshold} --- corresponding to a smoothly spread, absolutely continuous measure rather than a point mass --- survives up to its own explicit threshold, and it identifies precisely the class of measures, point masses and their neighborhoods, that any future general sufficient condition would need to exclude.

Two directions follow directly from what remains open. Determining the sharp condition on a general representing measure $\mu$, generalizing the scalar threshold of Theorem~\ref{thm:threshold} beyond the specific one-parameter family for which it was derived, is a genuine extremal problem within the classical theory of the Carath\'eodory class, not a routine extension of the argument given here. Extending the entire framework beyond the unit disc, to semi-complete vector fields on balls in several variables or in Hilbert space, or beyond interior fixed points, to the boundary case governed by a different normalization and by the boundary Denjoy--Wolff theory, would test how much of the present theory is specific to the disc's one-dimensional geometry and how much reflects the Berkson--Porta representation more generally; neither extension is attempted here. Taken together, the results establish that exact, rather than conservative, contraction rates for semigroups of holomorphic self-maps of the disc are attainable through a classical representation-theoretic mechanism, and delineate, through an explicit obstruction traced to its precise structural source, exactly where that mechanism's reach ends.

\backmatter

%\bmhead{Supplementary information}
%
%If your article has accompanying supplementary file/s please state so here. 
%
%Authors reporting data from electrophoretic gels and blots should supply the full unprocessed scans for key as part of their Supplementary information. This may be requested by the editorial team/s if it is missing.
%
%Please refer to Journal-level guidance for any specific requirements.

\bmhead{Acknowledgements}

This work was partially supported by the project ?Increasing the Knowledge Intensity of Ida-Viru Entrepreneurship,? co-funded by the European Union. The authors declare that the research was conducted impartially and without any commercial influence.

\section*{Declarations}

\textbf{Funding.} This work was partially supported by the project ``Increasing the Knowledge Intensity of Ida-Viru Entrepreneurship,'' co-funded by the European Union.

\textbf{Conflict of interest.} The author declares no conflict of interest.

\textbf{Ethics approval and consent to participate.} Not applicable. This work is a theoretical study in complex analysis and does not involve human participants, human data, or animals.

\textbf{Consent for publication.} Not applicable. This manuscript does not contain any individual person's data in any form.

\textbf{Data availability.} No empirical or experimental data were generated or analyzed in this study. All numerical results reported in Section~\ref{sec:numerics} are direct evaluations of the closed-form expressions derived in Sections~\ref{sec:threshold} and~\ref{sec:obstruction}, and can be reproduced exactly from the formulas given in the text.

\textbf{Materials availability.} Not applicable.

\textbf{Code availability.} The Python code used to generate the numerical results and figures of Section~\ref{sec:numerics} is available from the corresponding author upon reasonable request.

\textbf{Author contribution.} S.S. is the sole author of this manuscript and is responsible for all aspects of the conceptualization, analysis, and writing.

\noindent
If any of the sections are not relevant to your manuscript, please include the heading and write `Not applicable' for that section. 

%%===================================================%%
%% For presentation purpose, we have included        %%
%% \bigskip command. Please ignore this.             %%
%%===================================================%%
%\bigskip
%\begin{flushleft}%
%Editorial Policies for:
%
%\bigskip\noindent
%Springer journals and proceedings: \url{https://www.springer.com/gp/editorial-policies}
%
%\bigskip\noindent
%Nature Portfolio journals: \url{https://www.nature.com/nature-research/editorial-policies}
%
%\bigskip\noindent
%\textit{Scientific Reports}: \url{https://www.nature.com/srep/journal-policies/editorial-policies}
%
%\bigskip\noindent
%BMC journals: \url{https://www.biomedcentral.com/getpublished/editorial-policies}
%\end{flushleft}

\begin{appendices}

\section{Rotational Covariance of the Herglotz Kernel}\label{secA1}

This appendix supplies the covariance argument used without proof in Corollary~\ref{cor:kernelnegative}, isolating it as a self-contained lemma. Recall from Section~\ref{sec:obstruction} that, for a unimodular $\xi$, the point-mass Carath\'eodory function corresponding to $d\mu=\delta_\xi$ with unit total mass is $p_\xi(z):=(\xi+z)/(\xi-z)$, and $\Phi(z,\xi)$ denotes the value of $\lambda_{\mathrm{local}}(z)-2$ computed from $p_\xi$ in place of $p$ in the formula of Proposition~\ref{prop:bpsubstitution}.

\begin{lemma}
\label{lem:covariance}
For every unimodular $\xi$ and every $z\in D$, $p_\xi(z)=p_1(\bar\xi z)$, where $p_1(z)=(1+z)/(1-z)$. Consequently, writing $\Lambda[p](z):=2\,\mathrm{Re}(p(z))(1+|z|^2)/(1-|z|^2)+2\,\mathrm{Re}(zp'(z))$ for the functional of Proposition~\ref{prop:bpsubstitution}, $\Lambda[p_\xi](z)=\Lambda[p_1](\bar\xi z)$ for every $z\in D$.
\end{lemma}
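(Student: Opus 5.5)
The plan is to prove the two assertions in order: first the algebraic identity $p_\xi(z)=p_1(\bar\xi z)$, then transport it through the functional $\Lambda$ using the rotation invariance of each ingredient of $\Lambda$.

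For the first assertion, I would use only $|\xi|=1$, i.e.\ $\xi\bar\xi=1$. Multiplying numerator and denominator of $(\xi+z)/(\xi-z)$ by $\bar\xi$ gives
\[
p_\xi(z)=\frac{1+\bar\xi z}{1-\bar\xi z}=p_1(\bar\xi z).
\]
Since $|\bar\xi z|=|z|<1$, the point $\bar\xi z$ lies in $D$, so $p_1(\bar\xi z)$ is well defined and holomorphic in $z$.

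For the second assertion, set $w:=\bar\xi z$ and check the three ingredients of $\Lambda[p_\xi](z)$ separately.
\begin{enumerate}
\item The radial weight is unchanged because $|w|=|z|$, so $(1+|z|^2)/(1-|z|^2)=(1+|w|^2)/(1-|w|^2)$.
\item The real-part term is unchanged: $\mathrm{Re}(p_\xi(z))=\mathrm{Re}(p_1(w))$ directly from the first part.
\item For the derivative term, the chain rule gives $p_\xi'(z)=\bar\xi\,p_1'(\bar\xi z)$. Hence $zp_\xi'(z)=(\bar\xi z)\,p_1'(\bar\xi z)=wp_1'(w)$, and so $\mathrm{Re}(zp_\xi'(z))=\mathrm{Re}(wp_1'(w))$.
\end{enumerate}
Substituting these three identities into the definition of $\Lambda$ yields $\Lambda[p_\xi](z)=\Lambda[p_1](\bar\xi z)$.

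There is no real obstacle here. The only step needing care is the derivative term: the factor $\bar\xi$ produced by the chain rule must combine with $z$ into exactly $w$. This works because $\Lambda$ involves $p'$ only through the Euler-type combination $zp'(z)$, which is invariant under rotations $z\mapsto\bar\xi z$.

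As a consequence, taking $z=-r\xi$ gives $\bar\xi z=-r$, hence $\Phi(-r\xi,\xi)=\Phi(-r,1)$. This is the form in which the lemma is used in Corollary~\ref{cor:kernelnegative}.
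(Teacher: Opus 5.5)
Your proof is correct and follows the paper's argument essentially step for step. It uses the same manipulation with $\xi\bar\xi=1$ for the first identity, and for the second the same substitution $w=\bar\xi z$, with the chain rule $p_\xi'(z)=\bar\xi\,p_1'(w)$ giving $zp_\xi'(z)=wp_1'(w)$ together with $|z|=|w|$.
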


\begin{proof}
The first identity is immediate: $p_1(\bar\xi z)=(1+\bar\xi z)/(1-\bar\xi z)$, and multiplying numerator and denominator by $\xi$, using $\xi\bar\xi=1$, gives $(\xi+z)/(\xi-z)=p_\xi(z)$.

For the second identity, set $w:=\bar\xi z$, so $z=\xi w$ and $|z|=|w|$ since $|\xi|=1$. Differentiating $p_\xi(z)=p_1(\bar\xi z)$ with respect to $z$ gives $p_\xi'(z)=\bar\xi\,p_1'(\bar\xi z)=\bar\xi\,p_1'(w)$. Then
\begin{align*}
\Lambda[p_\xi](z) &= 2\,\mathrm{Re}(p_\xi(z))\frac{1+|z|^2}{1-|z|^2} + 2\,\mathrm{Re}(zp_\xi'(z)) \\
&= 2\,\mathrm{Re}(p_1(w))\frac{1+|w|^2}{1-|w|^2} + 2\,\mathrm{Re}\big(\xi w\cdot\bar\xi\,p_1'(w)\big),
\end{align*}
using $p_\xi(z)=p_1(w)$ and $|z|=|w|$ in the first term. In the second term, $\xi w\cdot\bar\xi\,p_1'(w)=\xi\bar\xi\,wp_1'(w)=wp_1'(w)$, since $\xi\bar\xi=|\xi|^2=1$. Substituting,
\begin{equation*}
\Lambda[p_\xi](z) = 2\,\mathrm{Re}(p_1(w))\frac{1+|w|^2}{1-|w|^2} + 2\,\mathrm{Re}(wp_1'(w)) = \Lambda[p_1](w) = \Lambda[p_1](\bar\xi z),
\end{equation*}
as claimed.
\end{proof}

\begin{corollary}
\label{cor:covarianceapplied}
$\Phi(-r\xi,\xi)=\Phi(-r,1)$ for every $r\in(0,1)$ and every unimodular $\xi$.
\end{corollary}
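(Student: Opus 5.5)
The corollary follows by specializing Lemma~\ref{lem:covariance} to the point $z=-r\xi$ and unwinding the definition of $\Phi$. Recall that $\Phi(z,\xi)$ is defined as $\Lambda[p_\xi](z)-2$, the value of $\lambda_{\mathrm{local}}(z)-2$ computed from the unit point mass at $\xi$. So the claim $\Phi(-r\xi,\xi)=\Phi(-r,1)$ is equivalent to
\[
\Lambda[p_\xi](-r\xi)=\Lambda[p_1](-r).
\]
The constant $2$ cancels on both sides, so it plays no role.

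Fix $r\in(0,1)$ and a unimodular $\xi$, and set $z:=-r\xi$. Then $|z|=r<1$, so $z\in D$ and Lemma~\ref{lem:covariance} applies at $z$. It gives
\[
\Lambda[p_\xi](-r\xi)=\Lambda[p_1](\bar\xi\cdot(-r\xi)).
\]
Since $\xi\bar\xi=|\xi|^2=1$, the argument on the right is $\bar\xi\cdot(-r\xi)=-r$. This yields $\Lambda[p_\xi](-r\xi)=\Lambda[p_1](-r)$, and subtracting $2$ from both sides gives $\Phi(-r\xi,\xi)=\Phi(-r,1)$.

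The only point needing care is a bookkeeping one. We must check that $\Phi(\cdot,1)$ as defined in the appendix, built from $p_1(z)=(1+z)/(1-z)$ with unit mass, is the same function as in Proposition~\ref{prop:extremalfails} with $a=1$. This holds because $p_1$ is exactly the $p$ of that proposition at $a=1$. Consequently $\Phi(-r,1)=2(1-r)^2/(1+r)^2-2$, which is the identification Corollary~\ref{cor:kernelnegative} relies on. There is no genuine analytic obstacle here: all the work is contained in Lemma~\ref{lem:covariance}.
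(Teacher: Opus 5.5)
Your proposal is correct and follows the paper's own proof: apply Lemma~\ref{lem:covariance} at $z=-r\xi$, use $\bar\xi(-r\xi)=-r$ to get $\Lambda[p_\xi](-r\xi)=\Lambda[p_1](-r)$, and subtract $2$ from both sides using the definition of $\Phi$. Your extra check that $\Phi(-r,1)=2(1-r)^2/(1+r)^2-2$, which matches Proposition~\ref{prop:extremalfails} at $a=1$, is also correct, though the corollary itself does not need it.
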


\begin{proof}
Apply Lemma~\ref{lem:covariance} at $z=-r\xi$: since $\bar\xi z=\bar\xi(-r\xi)=-r\xi\bar\xi=-r$, the lemma gives $\Lambda[p_\xi](-r\xi)=\Lambda[p_1](-r)$. Subtracting $2$ from both sides, using the definition of $\Phi$ in terms of $\Lambda$, gives $\Phi(-r\xi,\xi)=\Phi(-r,1)$.
\end{proof}

\begin{remark}
Corollary~\ref{cor:covarianceapplied} is exactly the fact invoked in the proof of Corollary~\ref{cor:kernelnegative}: it shows the value of $\Phi$ at the test point $-r\xi$ against the point mass at $\xi$ is, for every $\xi$, identical to its value at $-r$ against the point mass at $1$, computed directly in Proposition~\ref{prop:extremalfails}. Corollary~\ref{cor:kernelnegative}'s conclusion, that $\Phi(-r\xi,\xi)<0$ for every $r\in(0,1)$ and every unimodular $\xi$, then follows immediately from Proposition~\ref{prop:extremalfails}'s strict inequality $\lambda_{\mathrm{local}}(-r)<2$ for $p=p_1$, without needing to recompute $\Phi$ separately for each $\xi$.
\end{remark}

\end{appendices}

\bibliography{sn-bibliography}% common bib file
%% if required, the content of .bbl file can be included here once bbl is generated
%%\input sn-article.bbl

\end{document}